\documentclass{article}

\usepackage[preprint]{neurips_2026}

\usepackage[utf8]{inputenc} % allow utf-8 input
\usepackage[T1]{fontenc}    % use 8-bit T1 fonts
\usepackage{hyperref}       % hyperlinks
\usepackage{url}            % simple URL typesetting
\usepackage{booktabs}       % professional-quality tables
\usepackage{amsfonts}       % blackboard math symbols
\usepackage{nicefrac}       % compact symbols for 1/2, etc.
\usepackage{microtype}      % microtypography
\usepackage{xcolor}         % colors

\usepackage{float}

\usepackage[normalem]{ulem}
\usepackage{framed}
\usepackage{tcolorbox}
\usepackage{pgfplots}
\usepackage[utf8]{inputenc}
\usepackage{booktabs}
\usepackage{amsmath,amsthm,verbatim,amssymb,amscd, graphicx}
\usepackage{natbib}
\usepackage{bbm}
\usepackage{verbatim}
\usepackage{xcolor}
\usepackage{hyperref}
\usepackage{amssymb,bm}
\usepackage{float}
\usepackage{mathrsfs} %Imports biblatex package

\usepackage{romannum}
\usepackage{graphicx}
\usepackage{subcaption}
\newtheorem{theorem}{Theorem}[section]

\newtheorem{remark}{Remark}[section]
\newtheorem{lemma}{Lemma}[section]
\newtheorem{proposition}{Proposition}[section]
\newtheorem*{definition}{Definition}

\newcommand{\pr}[1]{\mathbb{P}{\left(#1\right)}}

\newcommand{\wh}{\widehat}

\newcommand{\eps}{\varepsilon}
\newcommand{\mb}{\mathbb}
\def\l{\left}
\def\r{\right}
\newcommand\m{\mathcal}

\def\be#1{\begin{equation*}{#1}\end{equation*}}
\def\ben#1{\begin{equation}{#1}\end{equation}}

\newcommand{\E}{{\mathbb E}}

\newcommand{\simo}[1]{{\color{red}#1}}
\pgfplotsset{compat=1.18} 
\title{
Sub-Gaussian Confidence Intervals for Heavy-Tailed Data: Characterizing the Limits of Inference}

\author{
  Ilyes Hammouda \\
  Department of Statistics\\
  CREST ENSAE\\
  \texttt{hammouda@ensae.fr} \\
   \And
   Stanislav Minsker \\
   Department of Mathematics \\
   University of Southern California \\
   \texttt{minsker@usc.edu} \\
   \And
   Mohamed Ndaoud \\
    Information and Data Systems \\
   ESSEC Business School \\
   \texttt{ndaoud@essec.edu} \\
}

\begin{document}

\maketitle
\pagenumbering{arabic}

\begin{abstract}
We investigate the problem of uncertainty quantification in the mean estimation framework. Given an i.i.d. sample from a distribution with unknown mean $\mu$ and  variance $\sigma^2$, we want to construct an estimator $\wh\mu_n$ and provide a computable and size-optimal upper bound for the error $|\wh \mu_n - \mu|$. While estimators possessing strong deviation guarantees are well known, no data-dependent non-asymptotic upper bounds for their performance exist in general. We show that this gap can be bridged by introducing a parameter that characterizes the ``effective sample size'' available for uncertainty quantification. This parameter captures the difficulty of the problem, interpolating between ``easy'' cases where sub-Gaussian confidence intervals exist and ``hard" cases where their construction is impossible. Using this characterization, we design confidence intervals of optimal length that are fully adaptive to the unknown variance. Numerical experiments confirm that our approach maintains nominal coverage even in asymmetric and heavy-tailed regimes where other existing methods fail.
\end{abstract}

%###########################
\section{Introduction}
\label{sec:intro}
%###########################

 Estimation and uncertainty quantification problems related to the mean of a population are among the oldest and most studied in mathematical statistics, going back to the works of W. Gosset \citep{student1908probable}, \citet{fisher1925applications}, and  \citet{neyman1937outline}, among others. Over the past 15 years, particular attention has been drawn to the range of questions dealing with robust mean estimation -- a problem of mean estimation in the presence of contamination, often modeled via the outliers, or ``atypical'' observations. The research has largely centered around construction of optimal estimators while the problem of robust inference received less attention. 
 In this paper, we ask the following question: given a sequence of i.i.d. copies $X_1,\ldots,X_n$ of a random variable $X$ with mean $\mu$ and finite variance $\sigma^2$ where both parameters are unknown, when is it possible to construct a (non-asymptotic) confidence interval $\wh C_n(X_1,\ldots,X_n)$ of finite length and such that $\pr{\wh C_n \ni\mu}\geq 0.95$? Moreover, we are interested in characterizing the optimal size of such intervals. 
 
 Distinctions between the problems of rate-optimal adaptive estimation and adaptive inference are well known in the literature: there exist scenarios where the first task admits a solution while the second one does not. Classical examples include the problem of constructing adaptive confidence bands for the probability density functions of unknown smoothness \citep{low1997nonparametric} as well as the problem of sparse linear regression with unknown sparsity level \citep{nickl2013confidence}. 

 As the statistical research paradigm shifts from an asymptotic to a non-asymptotic perspective, new challenges in the basic problem of mean estimation have emerged. It has been shown by \cite{catoni2012challenging,devroye2016sub} that for distributions with finite variance\footnote{Samples from such distributions exhibit naturally occurring outliers, hence they present a natural model for contaminated data.} there exist sub-Gaussian mean estimators, that is, measurable functions $\wh\mu_n(\delta):=\wh \mu_n(X_1,\ldots,X_n)(\delta)$ such that 
 \[
 \pr{|\wh\mu_n(\delta) - \mu|\geq C\sigma\sqrt{\frac{\log(1/\delta)}{n}}} \leq \delta
 \]
 for all $e^{-cn}\leq \delta\leq e^{-1}$ where $c,C>0$ are absolute constants.  Moreover, it is impossible to improve this bound uniformly, aside from the absolute constants \citep{catoni2012challenging}. Note that $\wh\mu_n(\delta)$ does not require $\sigma$ to be known. When $X$ has normal distribution, the empirical mean $\Bar{X}_n$ satisfies the inequality
\ben{
\label{eq:t-distribution}
\mathbb{P}\left( |\Bar{X}_n - \mu| \geq C s_n \sqrt{\frac{\log(1/\delta)}{n}}\right) \leq \delta,
}
for $e^{-cn}\leq \delta \leq e^{-1}$ where $s^2_n = \frac{1}{n-1}\sum_{j=1}^n (X_j - \bar X_n)^2$ is the empirical variance (see Lemma \ref{lemma:gaussian} below for a version with explicit constants). The inequality \eqref{eq:t-distribution} can be equivalently rewritten as 
\[
\pr{\mu \in \l[ \bar X_n - Cs_n\sqrt{\frac{\log(1/\delta)}{n}},\bar X_n + Cs_n\sqrt{\frac{\log(1/\delta)}{n}}\r]}\geq 1-\delta.
\]
%for $\mu$ to obtain the standard confidence interval based on the quantiles of Student's t-distribution. 
Therefore, one may expect that, like in the mean estimation problem, this result for confidence intervals may be extended to all distributions in the domain of attraction of the normal law. 
However, such ``universal'' non-asymptotic confidence intervals do not exist: for any measurable $\wh L_n(X_1,\ldots,X_n), \ \wh U_n(X_1,\ldots,X_n)$ such that $-\infty<\wh L_n \leq \wh U_n<\infty$ almost surely, 
\[
\inf_{X\sim P\in \m P^{(2)}} \pr{[\wh L_n,\wh U_n]\ni \mu} = 0
\]
where $\m P^{(2)}$ is the class of all distributions with finite variance. This result follows from Corollary 3 in \citep{bahadur1956nonexistence}. 

The present paper provides further refinement of this impossibility result: we construct a nested sequence of subclasses of $\m P^{(2)}$ and characterize the optimal performance of confidence intervals over these subclasses. We also show that it is impossible to ``adapt'' to the scale defining these subclasses in a data-dependent way. Our ``effective sample size'' framework extends to distributions with finite
$(1+\beta)$-moments, for $\beta\in(0,1]$. The corresponding generalized
effective-mass parameter, optimal confidence-interval rate, and matching lower
bound are stated in Section \ref{sec:existence} and proved in Appendix \ref{section:proofs_upper}. 

Let us mention that another classical framework for modeling outliers is the Huber's contamination model \citep{huber1964robust} where the distribution of $X$ is a mixture $(1-\eps)P + \eps Q$ of a distribution $P$ with mean $\mu$ (a parameter of interest), and an arbitrary distribution $Q$. 
When $\eps$ is known and $P$ has variance bounded by a known constant $\sigma_0^2$, \citet{wang2023huber} showed that it is possible to construct confidence intervals of length scaling as $\sigma_0\sqrt{\eps}$ (also see \citep{bateni2020confidence,maurer2009empirical,shekhar2023near} for results related to a class of compactly supported distributions). However, when $\eps$ is unknown, the price for adaptation is very high: recently, \citet{luo2024adaptive} showed that if $P$ is the normal distribution with variance $\sigma^2$, then the confidence interval has to have length at least $\sigma(\log(n))^{-1/2}$ even if there are no outliers in the data ($\eps=0$) but potential contamination is suspected. If moreover $\sigma^2$ is unknown, finite-length confidence intervals do not exist (even though the ``inliers'' are normally distributed).

%\begin{notation}
%#####################
\subsection{Notation}
\label{section:notation}
%#####################

We use $c,c_1,C,C_1,\alpha$, etc. to denote absolute positive constants and $c_n,\alpha_n$, etc. to denote constants depending on $n$. For two sequences $(a_n)_n$ and $(b_n)_n$ we say that $a_n = \Omega(b_n)$ (resp. $a_n = \mathcal{O}(b_n)$) if $a_n \geq C b_n$ (resp. $a_n \leq C b_n$) for some $C>0$ and large values of $n$. We say $a_n \simeq b_n$ if $a_n = \Omega(b_n)$ and $a_n = \mathcal{O}(b_n)$.  We also say that $a_n = o(b_n)$ iff $a_n/b_n\to 0$. 
For a positive integer \( N \), \([N]\) will denote the set \(\{1, \dots, N\}\).
For a given set $A$, $|A|$ will denote the cardinality of the latter. For a sequence of $K$ elements $(u_n)_{n\in [K]}$, $\text{med}(u_1 \cdots u_K)$ will refer to the median of the sequence, i.e., if we order the $K$ elements in ascending order, such that $u_{(1)} \leq u_{(2)} \cdots u_{(K)} $, then 
\begin{equation*}
    \text{med}(u_1, \dots, u_K) = 
\begin{cases}
u_{\left(\frac{K+1}{2}\right)}, & \text{if } K \text{ is odd} \\
\frac{1}{2}\left[u_{\left(\frac{K}{2}\right)} + u_{\left(\frac{K}{2}+1\right)}\right], & \text{if } K \text{ is even}
\end{cases}.
\end{equation*}
We will also use notations $\left(\xrightarrow{d} \right)$ to denote convergence in distribution, $\left(\xrightarrow{a.s.}\right)$ for almost sure convergence and $\left(\xrightarrow{\mathbf{P}} \right)$ for convergence in probability. Finally, given a random variable $\xi$ and a probability distribution $P$, we write $\xi\sim P$ to say that $\xi$ is distributed according to $P$. Additional notation will be introduced on demand. 
%\end{notation}

%###############################
\section{When do confidence intervals exist?}
\label{sec:existence}
%###############################

While there are no non-trivial adaptive confidence intervals for the class $\m P^{(2)}$, they do exist for large sub-classes $\tilde{\m P}$ of $\m P^{(2)}$. A notable construction of such intervals is due to \citet[][Lemma 6.1]{devroye2016sub} where it is shown that it suffices to require that the sample averages have ``approximately symmetric'' distributions for all sufficiently large samples. 
%then required confidence intervals exist. 
Specifically, the sufficient condition is that $\forall j\geq \lfloor\frac{n}{\log(1/\delta)} \rfloor$ and all $P\in \tilde{\m P}$,
\ben{
\label{assumption:1}
\min(p_-(j),p_+(j))\geq 1/3,
}
where $p_-(j) = \pr{\frac{1}{j}\sum_{i=1}^j X_i\leq \mu}$ and 
$p_+(j) = \pr{\frac{1}{j}\sum_{i=1}^j X_i\geq \mu}$\footnote{The constant $1/3$ in \eqref{assumption:1} can be replaced by any $0<C<1/2$.}. Of course, assumption \eqref{assumption:1} holds for every fixed $P\in \m P^{(2)}$ and all $j\geq j(P)$ whenever $j(P)$ is sufficiently large in view of the Central Limit Theorem, but it does not hold uniformly over $\m P^{(2)}$ since $\sup_{P\in \m P^{(2)}} j(P)=\infty$. Natural classes of distributions $\tilde{\m P}$ that satisfy assumption \eqref{assumption:1} are the ones that admit uniform bounds in the central limit theorem, for example, such that for some $p>2$, 
\[
\sup_{P\in \tilde{\m P}} \frac{\mathbf{E}^{1/p}|X-\mu|^p}{\sigma} \leq C\l( \tilde{\m P}\r)<\infty.
\]
In this case, condition \eqref{assumption:1} is implied by Berry-Esseen type inequalities \citep{petrov2012sums}. 

\textbf{Main contribution.} In this paper, we construct a family $\{ \m P_\alpha(n,\delta) \}_{0\leq \alpha\leq 1}$ of nested subclasses of $\m P^{(2)}$, formally defined below, such that
\[
\m P^{(2)}=\m P_0(n,\delta)
\supseteq \m P_{\alpha_1}(n,\delta)
\supseteq \m P_{\alpha_2}(n,\delta),
\qquad \alpha_1\leq\alpha_2,
\]
and such that for each $\m P_\alpha(n,\delta)$ there exist optimal-length sub-Gaussian confidence intervals.

\begin{definition}
Let $n$ be an integer, $0<\delta<1$. Let $X$ be a random variable with mean $\mu$ and variance $\sigma^2>0$, and set $\xi:=\xi(X)=\frac{X-\mu}{\sigma}$. Define
\[
\alpha_{n,\delta}(\xi) := {\sup} \left\{\gamma\geq 0 \;, \mathbf{E}\l(\xi^2 \mathbf{1}\l(\xi^2 \leq \frac{n\gamma}{\log(1/\delta)}\r)\r) \geq  \gamma \right\},
\]
and the associated class $\mathcal{P}_{\alpha}(n,\delta) :=\left\{ P \in \m P^{(2)}: \ X\sim P \text{ and } \alpha_{n,\delta}(\xi) \geq \alpha \right\}$. 
\end{definition}
%\stas{(replaced $\xi(X)\sim P$ by $X\sim P$, otherwise we only include distributions with variance 1)} \simo{I think it s best if we define the noise as standard in that sense we are adaptive to the noise distribution $P$ and the noise level $\sigma$}
\noindent Notice that $\alpha_{n,\delta}(\xi)$ is always finite. Indeed, $\gamma=0$ belongs to the set 
\[
\m A = \left\{\gamma\geq 0 \;, \mathbf{E}\l(\xi^2 \mathbf{1}\l(\xi^2 \leq \frac{n\gamma}{\log(1/\delta)}\r)\r) \geq  \gamma \right\},
\]
and moreover $\m A\subseteq [0,1]$. Hence, $0\leq \alpha_{n,\delta}(\xi) \leq 1$. Observe also that $\alpha_{n,\delta}(\xi) \neq 0$ implies that $n\geq \log(1/\delta)$, a known requirement necessary (up to absolute constants) for existence of sub-Gaussian mean estimators. A lower bound on $\alpha$ excludes distributions with a ``dominant'' mass at $0$ from the class $\mathcal{P}_{\alpha}(n,\delta)$, as such distributions often lead to various impossibility results. For example, it is easy to see that the inequality 
\[
\mathbb{P}(1/2 \leq \xi^2\leq 4) \geq 1/2
\]
implies that $\alpha_{n,\delta}(\xi) \geq 1/4$ (of course, the constants in this relation can be modified). As we show below, $\alpha_{n,\delta}(\xi)$ quantifies the proportion of a sample of size $n$ that can be used to estimate $\sigma^2$, and hence we can think of the product $n\alpha_{n,\delta}(\xi)$ as the \emph{``effective sample size''}. 

Let us also mention that for any $\eps>0$, there exists $\alpha(\eps)$ small enough and a random variable $X$ such that its law $P_X \in \mathcal{P}_{\alpha}$ and
    $\min(p_-(j),p_+(j)) \leq \eps$, as shown formally in Lemma \ref{lem:counter}. Therefore, the approach of \citet{devroye2016sub} is limited to the case $\alpha_{n,\delta}$ is a constant bounded away from $0$.
    We are ready to state our main result. 
 % Prior impossibility results are usually based on distributions with a dominant mass on $0$. Our condition $\alpha_{n,\delta}(\xi)$ prevents such cases. One may also notice that $\mathbf{P}(1/2 \leq \xi^2\leq 4) \geq 1/2$ implies that $\alpha_{n,\delta}(\xi) \geq 1/4$. To some extent, $\alpha_n$ quantifies the available mass of observations that will help estimate $\sigma$, and hence we should think of $n\alpha_{n,\delta}(\xi)$ as the effective sample size.

\begin{tcolorbox}
\begin{theorem}
\label{theorem:main}
\textbf{(i)} Let $0<\delta<1$ and assume that $\xi(X)$ is distributed according to $P\in \m P^{(2)}$. There exists a sequence of confidence intervals $\wh C_n(X_1,\ldots,X_n) = [\wh L_n(X_1,\ldots,X_n),\ \wh U_n(X_1,\ldots,X_n]$ such that for all $\alpha_n\in (0,1]$,  
%\stas{(do we need $\inf_\sigma$ below? also, should we say how $\alpha_n$ and $\alpha$ are related?)}\simo{in my opinion the inf is relaetd to whether we want to talk about adaptation to $\sigma$ and $P$ separately. Also $\alpha_n$ and $\alpha$ are not related. They are completely independent quantities.}
\[
%\inf\limits_\sigma
\inf_{P\in \m P_{\alpha_n}(n,\delta)} \pr{\wh C_n\ni\mu}\geq 1-\delta \quad\text{  and }
\]
\[
%\inf_{P\in \m P_{\alpha_n}(n,\delta)}
\inf_{P\in \m P^{(2)},\sigma=\sigma_\ast}\pr{|\wh C_n|\leq K\sigma_\ast\sqrt{\frac{\log(1/\delta)}{n\alpha_n}}} \geq 1 -\delta,
\]
%\stas{(can the inf in the second statement above be over $\m P^{(2)}$ then?}
as long as $n\alpha_n \geq C \log(1/\delta)$, where $|\wh C_n| = |\wh U_n - \wh L_n|$ and $C,K>0$ are absolute constants. Moreover, $\wh L_n,\wh U_n$ are adaptive to $P$ 
%\stas{(does adaptivity to $P$ automatically mean adaptivity to $\sigma$?)} 
and $\alpha_{n,\delta}(\xi)$, meaning that they do not require any of these parameters to be known in advance, except for the lower bound $\alpha_n$ specifying the class $\m P_{\alpha_n}(n,\delta)$.

%Can we claim that the length of the confidence interval is actually controlled by $\alpha_{n,\delta}(\xi)$ and not its lower bound $\alpha_n$?

\textbf{(ii)} Let $\alpha_n,\gamma_n,\delta\in (0,1)$ be such that $\gamma_n>\alpha_n$, and let $c>0$ be a sufficiently small absolute constant. Then there is no sequence of adaptive  confidence intervals $\widetilde C_n(X_1,\ldots,X_n)$  that  simultaneously satisfy the conditions 
%\stas{(I think inf over $\sigma$ is not necessary here as well, it is already implicitly included in inf over P)}
\[
%\inf\limits_\sigma
\inf\limits_{P\in \m P_{\alpha_n}(n,\delta)}\pr{\widetilde C_n\ni \mu}\geq 1-\delta \quad \text{ and }
\]
%for all $X\sim P\in \m P_{\alpha_n}(n,\delta)$,
\[
\inf_{P\in \m P_{\gamma_n}(n,\delta),\sigma=\sigma_\ast} \pr{|\widetilde C_n| \leq c\sigma_\ast \sqrt{\frac{\log(1/\delta)}{n\alpha_n}} }\geq 1-\delta.
\]
%\stas{(is $\inf$ over $\m P_{\alpha_n}$?)}\simo{the second inf is over $\alpha$. In the sense that no matter what $\alpha$ you $P_1$ distribution has (Type 2 error) you will always pay the price $\alpha_n$ in your confidence interval. I suggest to replace $\alpha$ by $\gamma$ or other constant here}.\stas{(is it essentially equivalent if we say that adaptation to $\alpha_n$ is not possible?)}
%for some small constant $c>0$.
\end{theorem}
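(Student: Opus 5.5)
Part (i) will be obtained by combining a sub-Gaussian location estimator with a data-driven, lower-biased scale estimator built from the effective mass $\alpha_n$.

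\textbf{Location estimate.} Split the sample into two halves. On the first half, compute a median-of-means (or Catoni-type) estimator $\wh\mu_n(\delta)$. It satisfies $|\wh\mu_n-\mu|\le C_1\sigma\sqrt{\log(1/\delta)/n}$ with probability at least $1-\delta/2$.

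\textbf{Scale estimate.} On the second half, build a \emph{lower-biased} estimate of $\sigma$ that only uses truncation at level $\tau\sigma^2$, with $\tau=n\alpha_n/\log(1/\delta)$. Since $\sigma$ is unknown, work with pairwise differences $Y_i=(X_{2i}-X_{2i-1})/\sqrt2$, which remove $\mu$. The truncated second moment of $Y$ is comparable to that of $\xi$; this is the point where the definition of $\alpha_{n,\delta}$ has to be transferred from $\xi$ to the symmetrized variable, losing only constants. Then take the empirical order statistics of $Y_i^2$ and define $\wh\sigma^2$ as the largest $s$ such that
\[
\frac1m\sum_i Y_i^2\mathbf 1\bigl(Y_i^2\le s\,n\alpha_n/\log(1/\delta)\bigr)\ \ge\ c\,\alpha_n s .
\]
This is a self-normalized version of the fixed-point defining $\alpha_{n,\delta}$.

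Two one-sided bounds on $\wh\sigma$ are needed, each with probability $1-\delta/4$:
\begin{itemize}
\item[(a)] $\wh\sigma\ge c\sigma$ when $P\in\m P_{\alpha_n}(n,\delta)$, which gives coverage;
\item[(b)] $\wh\sigma\le C\sigma/\sqrt{\alpha_n}$ for every $P\in\m P^{(2)}$, which gives the length bound.
\end{itemize}
For (b), use Markov/Bernstein: truncated terms are bounded by $\tau\cdot$level, and the mean of the truncated sum is at most $\sigma^2$. Any $s$ much larger than $\sigma^2/\alpha_n$ then fails the inequality by a Bernstein bound whose variance proxy is $\tau s\sigma^2$. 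The monotonicity of the truncated sum in the level, combined with a union bound over a dyadic grid of $s$, handles the supremum.

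For (a), at $s=\sigma^2$ the expectation is at least $\alpha_n\sigma^2$. The lower-tail Bernstein (or Bennett) bound for nonnegative bounded variables has variance at most $\tau\sigma^2\cdot\alpha_n\sigma^2\cdot$const, so deviations are of order $\sqrt{\tau\alpha_n\sigma^4\log(1/\delta)/n}\simeq\alpha_n\sigma^2\sqrt{1/C}$. This is small relative to $\alpha_n\sigma^2$ precisely because $n\alpha_n\ge C\log(1/\delta)$.

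\textbf{Interval.} Set $\wh C_n=\wh\mu_n\pm K'\wh\sigma\sqrt{\log(1/\delta)/n}$. Coverage follows from (a) together with the location bound. For the length, (b) gives $|\wh C_n|\lesssim \sigma\sqrt{\log(1/\delta)/(n\alpha_n)}$, with the $\sqrt{\alpha_n}$ entering only through $\wh\sigma$. A small issue is that $\wh\sigma$ scales as $\sigma$ times a quantity in $[c,C/\sqrt{\alpha_n}]$; that is exactly what is needed.

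\textbf{Part (ii)} is a two-point (Le Cam) testing argument. Take $P_1\in\m P_{\gamma_n}$, for instance Gaussian-like with mean $0$ and variance $\sigma_*^2$. Take $P_0\in\m P_{\alpha_n}$ as a mixture: with probability $1-p$ it is $P_1$ shifted slightly, and with probability $p\approx\log(1/\delta)/n$ it is a far point chosen so that the mean equals $\mu_0=\sigma_0\cdot c'\sqrt{\log(1/\delta)/(n\alpha_n)}$ with $\sigma_0\simeq\sigma_*/\sqrt{\alpha_n}$. With this choice the effective mass of $P_0$ at level $n\alpha_n/\log(1/\delta)$ is about $\alpha_n$: the bulk contributes variance $\sigma_*^2\approx\alpha_n\sigma_0^2$, and the outlier sits just above the truncation level.

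\textbf{Main obstacle.} The hard part is tuning the outlier so that, simultaneously:
\begin{itemize}
\item $\alpha_{n,\delta}(P_0)\ge\alpha_n$ holds exactly;
\item the mean shift is $\gtrsim \sigma_*\sqrt{\log(1/\delta)/(n\alpha_n)}$ relative to the bulk;
\item the probability that no outlier appears among the $n$ samples, $(1-p)^n$, is at least about $\delta^{c}>2\delta$.
\end{itemize}
On the event of no outliers, the samples from $P_0$ and $P_1$ are indistinguishable in total variation. Therefore $\widetilde C_n$ must cover $\mu_0$ with probability $\ge 1-\delta-$(outlier prob) under $P_1$, and must also be short. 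Since it then contains both $0$ and $\mu_0$ with positive probability, its length is at least $\mu_0>c\sigma_*\sqrt{\log(1/\delta)/(n\alpha_n)}$, which is a contradiction. I expect to need $p$ slightly below $\log(1/\delta)/n$ and to handle the constants carefully there.
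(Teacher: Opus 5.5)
The gap is in step (a), and enlarging $C$ or $K'$ does not repair it. With $\tau=n\alpha_n/\log(1/\delta)$ you have $\tau\alpha_n\sigma^4\log(1/\delta)/n=\alpha_n^2\sigma^4$ identically. So your deviation term is of order $\alpha_n\sigma^2$, the same order as the mean it is supposed to be small against, and the constant in $n\alpha_n\ge C\log(1/\delta)$ simply cancels.

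Put differently, the summands are bounded by $\tau\sigma^2$ and have mean of order $\alpha_n\sigma^2$. The Bernstein/Bennett lower-tail exponent is therefore of order $m\alpha_n/\tau=m\log(1/\delta)/n$. For $m=n/4$ pairs this is $c\log(1/\delta)$ with $c<1$, so you get $\delta^{c}$, not $\delta$. Even the full sample only gives an exponent with $c<1$ at this truncation level.

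This is not slack in the bound. The definition of $\alpha_{n,\delta}$ lets the truncated mass be carried by values of size $n\alpha_n/\log(1/\delta)$ that occur only about $\log(1/\delta)$ times in the whole sample. Concretely, with $L=\log(1/\delta)$, let
\begin{itemize}
\item $\xi=\sqrt{n\alpha_n/L}$ with probability $L/n$;
\item $\xi=b\approx-\sqrt{\alpha_nL/n}$ with probability $1-L/n-s$;
\item $\xi=\pm w$ with probability $s/2$ each,
\end{itemize}
where $s\ll L/n$, $w^2\gg n/L$ and $b$ are tuned so that $\mathbf E\xi=0$ and $\mathbf E\xi^2=1$. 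Then $\mathbf E[\xi^2\mathbf 1(\xi^2\le n\alpha_n/L)]\ge\alpha_n$, so the law of $\mu+\sigma\xi$ lies in $\mathcal P_{\alpha_n}(n,\delta)$.

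With probability about $(1-L/n-s)^{n/2}\approx\delta^{1/2}$, every observation in your second half equals $\mu+b\sigma$. Then all $Y_i=0$ and $\widehat\sigma=0$. Independently, median-of-means on the first half has a majority of bulk-only blocks and returns exactly $\mu+b\sigma\neq\mu$ with probability bounded away from zero. Your interval is then $\{\widehat\mu\}$ and misses $\mu$ with probability of order $\delta^{1/2}\gg\delta$. Sample splitting makes this worse by halving the expected number of informative points.

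You also cannot simply import the paper's treatment of this step. The paper uses a median of $K\asymp\log(1/\delta)$ blockwise variances around $\widehat\mu$, with Bernstein at constant confidence per block followed by Hoeffding over blocks. But its per-block exponent is again $m\log(1/\delta)/n=\log(1/\delta)/K$, which is small when $K\ge150\log(1/\delta)$. In the example above only about $\log(1/\delta)$ of its blocks contain a non-bulk point, so that scale estimate also vanishes with probability close to one. A lower-tail argument of this kind needs the truncation window to be reached by a large multiple of $\log(1/\delta)$ observations in expectation, e.g.\ a small absolute constant in front of $n\gamma/\log(1/\delta)$ in the definition of the class. With the class exactly as defined, such an argument only delivers coverage $1-\delta^{c}$.

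Two smaller points on part (i). In (b), Markov is useless at level $\delta$, and the Bernstein exponent does not improve as $s$ grows. You need Bennett/Chernoff, whose extra logarithmic factor handles large $s$ at the price of a large constant. Also, the transfer of $\alpha_{n,\delta}$ from $\xi$ to $(X_{2i}-X_{2i-1})/\sqrt2$ is asserted rather than proved.

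Part (ii) is essentially the paper's argument: a common bulk plus a rare atom of probability of order $\log(1/\delta)/n$, with overlap at least $(1-p)^n$. It goes through once you make three fixes:
\begin{itemize}
\item Drop the ``shifted slightly''. The overlap bound uses $P_0\ge(1-p)P_1$ exactly, so take $P_0=(1-p)P_1+p\,\delta_z$.
\item Take $p$ a constant factor below $\log(1/\delta)/n$, so that $(1-p)^n$ is a large multiple of $\delta$.
\item Use the achievable shift $|\mu_0|=p|z|\asymp\sigma_0\sqrt p\asymp\sigma_*\sqrt{\log(1/\delta)/(n\alpha_n)}$, not $\sigma_0\sqrt{\log(1/\delta)/(n\alpha_n)}$.
\end{itemize}
Choosing $P_1$ bounded (e.g.\ Rademacher) places it in $\mathcal P_{\gamma_n}(n,\delta)$ for every $\gamma_n<1$.
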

\end{tcolorbox}
\begin{remark}
\label{remark:1}
\begin{enumerate}
\item[(a)] This theorem does not contradict the Central Limit Theorem as for every fixed distribution $P\in \m P^{(2)}$, and $X\sim P$, $\alpha_{n,\delta}(\xi(X)) \to 1$ when $n\to \infty$. 
    %For the CLT we first fix the distribution then take $n \to \infty$. 
\item[(b)] Theorem \ref{theorem:main} establishes that the optimal length for adaptive confidence intervals over the class $\m P_{\alpha_n}(n,\delta)$ is of order $\sigma \sqrt{\frac{\log(1/\delta)}{n\alpha_n}}$. 
\item[(c)] While part (i) guarantees existence of such confidence intervals, part (ii) states that it is essentially impossible to adapt to the value of $\alpha_n$: even if the data come from a distribution belonging to $\m P_{\gamma_n}(n,\delta)$ with $\gamma_n > \alpha_n$, any confidence interval that guarantees uniform coverage of order $\delta$ over the class $\m P_{\alpha_n}(n,\delta)$ has to have length at least $\tilde C\sigma \sqrt{\frac{\log(1/\delta)}{n\alpha_n}}$. 
\item[(d)] Theorem \ref{theorem:main} characterizes the interpolation between the ``easy" regime, where $\alpha_n$ is bounded away from zero by a constant, and the ``hard" regime, where $\alpha_n \to 0$. Here, the product $n\alpha_n$ serves as the effective sample size, as the length of the adaptive sub-Gaussian confidence interval scales precisely with $(n\alpha_n)^{-1/2}$.
\end{enumerate}
\end{remark}
%In other words, in order to achieve uniform coverage of level $\delta$ over the class $P_{\alpha_n}(n,\delta)$, the length of confidence intervals needs to be of order $\sigma \sqrt{\frac{\log(1/\delta)}{n\alpha_n}}$ even when $\alpha_{n,\delta}(\xi)$ is known. 

\textbf{Interpretation of the hard regime.} The case when \(\alpha_{n,\delta}\) is close to $0$ should not be interpreted as simply an artificial theoretical construct. Rather, it describes distributions with a ``variance gap'': observations of small or moderate magnitude account for only a small fraction of the population variance, while observations with high values have very low
probability of occurrence but still contribute most to the variance. Such a regime naturally arises in imbalanced mixture distributions. For instance, in the construction underlying our lower bounds, the minority component may have weight of order \(\log(1/\delta)/n\). For \(n=200\) and \(\delta=0.05\), this corresponds to about \(1.5\%\) of the population, or roughly three observations on average. These observations may look like outliers relative to the dominant component, but they belong to the target distribution and removing them changes the estimand. Thus, a typical sample may appear approximately Gaussian with small variance, while a few legitimate observations determine both its centering and a substantial fraction of its population variance. There are natural examples where such rare ``outliers''
carry most of the useful information in the sample: financial returns with occasional large losses or gains, insurance portfolios with extremely rare catastrophic claims that can lead to insurer's
financial losses if left unaccounted for, errors in measurement systems such as GPS positioning where one wants to account for the possible loss of satellite connection.

Note that within the class of distributions where $\alpha_n \asymp n^{-1} \log(1/\delta)$, all optimal adaptive intervals have length of order $\sigma$. This implies that in such cases, one cannot expect the confidence intervals to shrink around the true mean as the sample size $n$ increases.

A deviation inequality whose radius depends on the unknown scale \(\sigma\) does not by itself provide a computable confidence interval. The construction of \cite{devroye2016sub} requires the approximate-symmetry condition \eqref{assumption:1}, whereas Lemma \ref{lem:counter} shows that this condition may fail within \(\mathcal P_{\alpha_n}(n,\delta)\) when \(\alpha_n\) is small.

%In practice, $\alpha_n$ should be chosen to ensure that the interval length remains manageable. \stas{(I think this comment invites ``easy'' critique from the referees; perhaps, in practice we always choose $\alpha$ to be a constant)} 
Provided that $n\alpha_n = \Omega\left( \log(1/\delta)\right)$, 
%\stas{(do we mean $n\alpha_n\geq C\log(1/\delta)$? because $\alpha_n=0$ satisfies $n\alpha_n = \mathcal{O}\left( \log(1/\delta)\right)$)} \simo{Yes} 
the resulting interval is guaranteed to maintain the prescribed coverage. In table \ref{table:summary_results} below, we include a summary of existing results showing how our findings interpolate findings from the prior work.

\begin{table}[!ht]
\centering
\caption{Comparison of optimal adaptive confidence interval lengths for the mean $\mu$ under two moments.}
\label{table:summary_results}
\begin{tabular}{@{}llll@{}}
\toprule
\textbf{Case} &  \textbf{Optimal Interval Length} & \textbf{Reference} \\ \midrule
$\sigma$ known &  $\sigma \sqrt{\frac{\log(1/\delta)}{n}}$ &   \citep{nemirovskij1983problem} \\ 
\addlinespace
$\sigma$ unknown + symmetry condition  & $\sigma \sqrt{\frac{\log(1/\delta)}{n}}$ & \citet{devroye2016sub} \\
\addlinespace
$\sigma$ unknown and $P_X \in \mathcal{P}_{\alpha_n}$
& $\sigma \sqrt{\frac{\log(1/\delta)}{n\alpha_n}}$ & \textbf{Theorem \ref{theorem:main}} \\
\addlinespace
$\sigma$ unknown & $\infty$ (Does not exist) & \citet{bahadur1956nonexistence} \\
  \bottomrule
\end{tabular}
\end{table}
\paragraph{Extension to lower moments.}
Our results extend beyond the finite-variance setting. Assume that
$\mathbf{E}|X-\mu|^{1+\beta}<\infty$ for some $\beta\in(0,1]$, and let
$
v_{1+\beta}
    := \bigl(\mathbf{E}|X-\mu|^{1+\beta}\bigr)^{1/(1+\beta)}.
$
Defining the corresponding effective-mass parameter
$\alpha^{(\beta)}_{n,\delta}$ through the truncated $(1+\beta)$-moment,
the optimal confidence interval length becomes
\[
v_{1+\beta}
\left(
    \frac{\log(1/\delta)}
         {n\alpha^{(\beta)}_{n,\delta}}
\right)^{\frac{\beta}{1+\beta}}.
\]
Both the upper bound and the matching lower bound extend to this setting:
the proof follows the same scheme after replacing the truncated
second-moment condition and the variance estimator by their
$(1+\beta)$-moment analogues. The case $\beta=1$ recovers the
finite-variance results developed throughout the paper. We provide the
formal definition of $\alpha^{(\beta)}_{n,\delta}$ and the corresponding
proof details in Appendix \ref{section:proofs_upper}.

\textbf{Roadmap of the proof of Theorem \ref{theorem:main}.} We now outline the key steps and ideas of the proof.
\begin{enumerate}
    \item \textbf{Construct a preliminary robust mean estimator.}
    We start from a sub-Gaussian estimator $\widehat{\mu}$ satisfying
    \[
    |\widehat{\mu}-\mu|
    \lesssim
    \sigma\sqrt{\frac{\log(1/\delta)}{n}}
    \]
    under the finite variance assumption. The same observations are then reused for scale estimation. \emph{No sample splitting is required}.

    \item \textbf{Estimate the scale through blockwise truncated mass.}
    We partition the sample into $K\asymp\log(1/\delta)$ blocks and compute blockwise empirical second moments around $\widehat{\mu}$. The definition of $\alpha_{n,\delta}$ guarantees that each block contains enough moderate observations to provide a lower bound of order $\alpha_n\sigma^2$, while the finite variance assumption gives a constant-order upper bound. Median aggregation amplifies these fixed-probability blockwise controls, yielding Proposition \ref{prop:sigma_estimator}:
    $
    c\,\alpha_n\sigma^2
    \leq
    \widehat{\sigma}_{\mathrm{MoM}}^2
    \leq
    C\,\sigma^2 \;
    \text{with probability at least }1-\delta.
    $

    \item \textbf{Build the confidence interval.}
    Combining the robust mean estimator with the scale estimate gives an interval with radius
    $
    \widehat{\sigma}_{\mathrm{MoM}}
    \sqrt{\frac{\log(1/\delta)}{n\alpha_n}},
    $
    and hence length of order
    $
    \sigma\sqrt{\frac{\log(1/\delta)}{n\alpha_n}}.
    $ 
    This proves the upper bound in Theorem \ref{theorem:main}.

    \item \textbf{Design an $\alpha_n$-sensitive lower bound pair.}
    The lower bound requires more than a classical two-mass-point construction. We must simultaneously ensure that the null belongs to the ``difficult'' class $\mathcal{P}_{\alpha_n}(n,\delta)$, that the alternative belongs to an ``easier'' class $\mathcal{P}_{\gamma_n}(n,\delta)$, and that their means differ at the target scale
    $
    \sigma_\ast
    \sqrt{\frac{\log(1/\delta)}{n\alpha_n}}.
    $ 
    Writing $L=\log(1/\delta)$, we choose
    \[
    \tau=\frac{L}{n},
    \qquad
    A\asymp\sqrt{\frac{L}{n\alpha_n}},
    \]
    and construct two mixtures sharing the same dominant Gaussian component:
    \[
    P_0=(1-\tau)\mathcal{N}(A,1)+\tau\delta_{z_0},
    \qquad
    P_1=(1-\tau)\mathcal{N}(A,1)+\tau\delta_0,
    \]
    where $z_0$ is chosen so that $P_0$ is centered. The rare component under $P_0$ is genuine and population-relevant: it centers the distribution and carries the variance needed to place the standardized law in the hard class. Replacing this component by an atom at zero produces an easier alternative whose mean is shifted at the desired scale.

    \item \textbf{Exploit the common-mixture event.}
    If none of the $n$ observations come from the rare component, the two samples are generated from exactly the same Gaussian distribution. This event has probability
    \[
    (1-\tau)^n
    =
    \left(1-\frac{L}{n}\right)^n
    \gtrsim e^{-L}
    =
    \delta.
    \]
    Therefore, the product measures retain an overlap of order $\delta$, and every test must incur total error at least of this order. A Neyman--Pearson argument yields the separation lower bound of order 
    $
    \rho_n
    \gtrsim
    \sigma_\ast
    \sqrt{\frac{\log(1/\delta)}{n\alpha_n}}.
    $ 

    \item \textbf{Transfer testing limits to confidence intervals.}
    Test/confidence set duality converts the testing lower bound into the optimal length and non-adaptivity statements of Theorem \ref{theorem:main}. The same mixture principle also gives Proposition \ref{prop:sigma_lower} by producing nearly indistinguishable models whose variances differ by a factor of order $\alpha_n$.

\end{enumerate}

% The technical novelty of the lower bound is therefore the carefully calibrated mixture: its rare component simultaneously controls the population mean, the scale, membership in the effective-sample-size classes, and the overlap of the product measures. These requirements are not captured by a standard two-mass-point impossibility argument.

\section{A hypothesis testing perspective}\label{sec:test}
 
%Theorems 3.2 and 4.2 in \cite{devroye2016sub} imply that sub-Gaussian confidence intervals do not exist if only finite variance is assumed. 
%Given $n$ i.i.d. observations of $X$ a random variable with finite variance, the goal is to construct confidence intervals for the mean $\mathbf{E}(X)$. 
%If $X$ has only two finite moments we can construct robust estimators similar to Median-Of-Means that achieve a optimal deviation bounds for fixed level of confidence $\delta$ under the condition $n =\Omega(\log(1/\delta))$. The challenge remains to estimate the std $\sigma$ under only two moments. 
%\textbf{Question:} Under what conditions on $n$, the distribution of $X$ and $\delta$  can we construct adaptive and sub-Gaussian confidence intervals? 
%\stas{Another question: for which $\delta$ is it possible to construct a confidence interval with finite length? (not necessarily sub-Gaussian)}

In order to get a better understanding of the problem related to adaptive confidence intervals, we will reduce it to a hypothesis testing problem. We still assume that $X = \mu + \sigma \xi$, where $\mu \in \mathbb{R},\sigma >0$ and $\xi$ a centered random variable such that $\mathbf{E}(\xi^2) = 1$. 
%\stas{Let $\m P^{(2)}(\sigma_*)\subset \m P^{(2)}$ consist of all distributions with variance equal to $\sigma_*^2>0$}. 
We will also denote via $P_\xi \in \m P^{(2)}$ the distribution of $\xi$. 
Consider the following testing problem:
\[
\mathbf{H}_0: \mu = 0 \quad \text{vs} \quad \mathbf{H}_1: |\mu| \geq \rho_n,
\]
where $\rho_n>0$. Given $n$ i.i.d. observations $X_1,\ldots,X_n$, we define the minimax risk $\mathcal{R}^*(n,\rho_n,\sigma_*)$ via
\[
\mathcal{R}^*(n,\rho_n,\sigma_*):= \underset{\hat{T}}{\inf}\left( \underset{\mu=0, \sigma=\sigma_\ast,P_\xi \in \m P^{(2)}}{\sup}\pr{\hat{T} = 1} + \underset{|\mu|\geq \rho_n, \sigma=\sigma_\ast, P_\xi\in \m P^{(2)}}{\sup}\pr{\hat{T} = 0}\right),
\]
where the infimum is taken over all measurable test functions $\hat{T}(X_1,\dots,X_n)$. We are interested in necessary and sufficient ``separation threshold'' $\rho_n^*(\delta)$ that yields minimax risk of order $\delta$.

Under just two finite moments, it is well known that in order to achieve minimax risk of order $\delta$, the necessary and sufficient separation $\rho^*_n(\delta)$ is of order $\rho_n^*(\delta):=\sigma_* \sqrt{\log(1/\delta)/n}$ as long as $n=\Omega(\log(1/\delta))$. An optimal testing procedure is given by 
\[
\hat{T} = \mathbf{1}\{|\mathrm{MoM}(X_1,\dots,X_n)| \geq 4\sigma_*\sqrt{\log(1/\delta)/n}\},
\]
where $\mathrm{MoM}(X_1,\dots,X_n)$ is the median-of-means estimator \citep{nemirovskij1983problem}. \emph{This fact shows that testing the mean under two finite moments is not more difficult than testing under Gaussian noise}. Unfortunately, this testing procedure requires prior knowledge of $\sigma_*$. Our next goal is to understand the required separation for testing when $\sigma_*$ is unknown.

In order to characterize adaptive testing, we need to define the risk differently. The following definition of the minimax risk is inspired by the work \cite{luo2024adaptive}. Specifically, let
\[
\mathcal{R}(n,\rho_n,\sigma_*):=
 \underset{\hat{T}}{\inf}\left( \underset{\mu=0,\sigma>0,P_\xi \in \m P^{(2)}}{\sup}\pr{\hat{T} = 1} + \underset{|\mu|\geq \rho_n, \sigma=\sigma_\ast, P_\xi\in \m P^{(2)}}{\sup}\pr{\hat{T} = 0}\right),
\]
Here, the Type \Romannum{1} error is uniform over $\sigma>0$ in order to allow for adaptation, while the Type \Romannum{2} error is given for a fixed value $\sigma=\sigma_*$. This definition of the risk allows us to capture the optimal adaptive separation $\rho_n$ as a function of $\sigma_*$. 

The asymmetric treatment of the scale in this definition is intentional. Type-I error is controlled uniformly over \(\sigma>0\), since validity must hold without prior knowledge of the scale, whereas Type-II error is evaluated at a fixed \(\sigma=\sigma_\ast\), since the target interval length itself depends on the true scale. Up to immaterial constants in \(\delta\), this corresponds to requiring a level \(\delta\) test with power at least \(1-\delta\), and is precisely the testing criterion obtained from the desired properties of adaptive confidence intervals through test/confidence-set duality.

\begin{proposition}\label{prop:infty}
    For all values of $\rho_n\geq 0$, we have $\mathcal{R}(n,\rho_n,\sigma_*) = 1$.
\end{proposition}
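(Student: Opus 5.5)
Note first that taking $\hat T\equiv 0$ gives risk at most $1$, so $\mathcal{R}(n,\rho_n,\sigma_*)\le 1$. It remains to show that every test $\hat T$ satisfies
\[
\sup_{\mu=0,\sigma>0,P_\xi}\pr{\hat T=1}+\sup_{|\mu|\ge\rho_n,\sigma=\sigma_*,P_\xi}\pr{\hat T=0}\ge 1-\eps
\]
for every $\eps>0$. The plan is a two-point (Le Cam) argument. For each $\eps$ we exhibit a null distribution $Q_0$ (mean $0$, some variance) and an alternative $Q_1$ (mean $\ge\rho_n$, variance exactly $\sigma_*^2$) whose $n$-fold products satisfy $\|Q_0^{\otimes n}-Q_1^{\otimes n}\|_{TV}\le\eps$. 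The standard bound
\[
Q_0^{\otimes n}(\hat T=1)+Q_1^{\otimes n}(\hat T=0)\ge 1-\|Q_0^{\otimes n}-Q_1^{\otimes n}\|_{TV}
\]
then finishes the proof.

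For the construction, exploit the fact that under $\mathbf H_0$ the scale $\sigma$ is unrestricted. Take a small weight $\tau>0$ and a common bulk $\mathcal N(m,s^2)$. Set
\[
Q_1=(1-\tau)\mathcal N(m,s^2)+\tau\delta_{a},\qquad Q_0=(1-\tau)\mathcal N(m,s^2)+\tau\delta_{b}.
\]
The atom $b$ is chosen so that $Q_0$ has mean zero, i.e.\ $b=-(1-\tau)m/\tau$. This forces a huge variance under $Q_0$, which is allowed because the Type I supremum is over all $\sigma>0$. For $Q_1$ we choose $m,s,a$ so that its mean equals $\rho_n$ and its variance equals $\sigma_*^2$. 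One concrete choice is $m=\rho_n$, $a=\rho_n$ (so the atom sits at the mean), and $s^2=\sigma_*^2/(1-\tau)$; this gives mean $\rho_n$ and variance $(1-\tau)s^2=\sigma_*^2$ exactly. (If $\rho_n=0$, $\mathbf H_1$ already contains $\mathbf H_0$ and the claim is trivial; the construction still works with $m=0$.) Each distribution is a legitimate member of $\m P^{(2)}$ after standardization, with nondegenerate variance.

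The key step is the coupling bound $\|Q_0^{\otimes n}-Q_1^{\otimes n}\|_{TV}\le 1-(1-\tau)^n\le n\tau$. Indeed, $Q_0$ and $Q_1$ share the component $(1-\tau)\mathcal N(m,s^2)$, so $\|Q_0-Q_1\|_{TV}\le\tau$, and subadditivity over the product gives the bound. This is the same common-mixture idea as in Step 5 of the roadmap. Choosing $\tau=\eps/n$ makes the sum of errors at least $1-\eps$. Letting $\eps\to0$ yields $\mathcal R=1$, independently of $\rho_n$ and $\sigma_*$.

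The only delicate point is checking that the parameters are consistent. The alternative must have exactly $\sigma=\sigma_*$ and $|\mu|\ge\rho_n$, while the null needs only mean zero and finite positive variance. Placing all the "mean-correcting" mass in the null's rare atom achieves this. The price is that the null's variance, of order $m^2/\tau$, blows up as $\tau\to0$. This is harmless here, since there is no restriction on $\alpha_{n,\delta}$, but it is exactly the mechanism the classes $\m P_{\alpha}(n,\delta)$ are later designed to rule out.
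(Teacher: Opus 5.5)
Your proof is correct and is essentially the paper's argument: a Le Cam two-point bound in which the null and the alternative share a component of mass $1-\tau$, the null's rare atom re-centers the mean at zero (at the price of a variance of order $1/\tau$, which is admissible because $\sigma>0$ is unrestricted under $\mathbf{H}_0$), and letting $\tau\to 0$ drives the overlap $(1-\tau)^n$ to $1$. The only difference is cosmetic: the paper uses a point-mass bulk at $-A$ and places the alternative's unit variance on rare atoms at $\pm 1/\sqrt{\tau}-A$, while your Gaussian bulk carries $\sigma_*^2$ itself, as in the paper's later lower-bound construction. One small correction: the bound $1-(1-\tau)^n$ comes from coupling rather than from subadditivity, but $n\tau$ is all you use.
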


Proposition \ref{prop:infty} implies that we can't test the mean when $\sigma$ is unknown, i.e. $\rho_n^*(\delta)=\infty$.
The proof of this result is based on a simple construction of two distributions both of which have a large ``mass'' at $0$ so that they cannot be distinguished based on a sample of size $n$. This ``all-or-nothing'' risk reflects the impossibility of testing when $\sigma$ is entirely unknown. However, by restricting to the class $\mathcal{P}_\alpha$ we recover meaningful results. Indeed, if we define for each $\delta$ and $\gamma_n>\alpha_n$
\begin{multline*}
\mathcal{R}(n,\rho_n,\sigma_*,\alpha_n,\gamma_n,\delta)
\\
:=\underset{\hat{T}}{\inf}\left( \underset{\mu=0,\sigma >0,P_\xi\in \mathcal{P}_{\alpha_n}(n,\delta)}{\sup}\pr{\hat{T} = 1} + \underset{|\mu|\geq \rho_n,\sigma=\sigma_\ast,P_\xi \in \mathcal{P}_{\gamma_n}(n,\delta)}{\sup}\pr{\hat{T} = 0}\right),
\end{multline*}
then we get the following statement.
%\stas{[Can we replace $\m P_\gamma$ by $\m P^{(2)}$?]}
\begin{proposition}\label{prop:alpha}
    For each value of $\delta$ such that $n\alpha_n = \Omega(\log(1/\delta))$ and $\gamma_n>\alpha_n$, we have that 
    \[
    \mathcal{R}(n,\rho_n,\sigma_*,\alpha_n,\gamma_n,\delta) \leq \delta \text{ iff } \rho_n =\Omega\left( \sigma_* \sqrt{\frac{\log(1/\delta)}{\alpha_n n}}\right ).
    \]
\end{proposition}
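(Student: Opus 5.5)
We prove the two directions separately. The sufficiency direction rests on the upper-bound machinery of the roadmap; the necessity direction is the two-mixture construction.

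\emph{Sufficiency.} Let $L=\log(1/\delta)$. Take the sub-Gaussian preliminary estimator $\wh\mu$ of step 1 of the roadmap; it satisfies $|\wh\mu-\mu|\le C_1\sigma\sqrt{L/n}$ with probability at least $1-\delta/2$. Take also the blockwise median-of-means scale estimator $\wh\sigma^2_{\mathrm{MoM}}$ of Proposition~\ref{prop:sigma_estimator}, which gives $c\alpha_n\sigma^2\le \wh\sigma^2_{\mathrm{MoM}}\le C\sigma^2$ with probability at least $1-\delta/2$, uniformly over $P_\xi\in\m P_{\alpha_n}(n,\delta)$ and $\sigma>0$. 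Define the test
\[
\hat T=\mathbf 1\Bigl\{|\wh\mu|> C_2\,\wh\sigma_{\mathrm{MoM}}\sqrt{L/(n\alpha_n)}\Bigr\}, \qquad C_2=C_1/\sqrt c .
\]

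\emph{Type I error.} Under $\mu=0$ with arbitrary $\sigma$, on the good event we have
\[
C_2\,\wh\sigma_{\mathrm{MoM}}\sqrt{L/(n\alpha_n)}\ge C_1\sigma\sqrt{L/n}\ge|\wh\mu| ,
\]
so $\hat T=0$. This bound is scale-free, which is exactly where the uniformity over $\sigma>0$ is obtained.

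\emph{Type II error.} Under $|\mu|\ge\rho_n$ with $\sigma=\sigma_*$ and $P_\xi\in\m P_{\gamma_n}\subseteq\m P_{\alpha_n}$ (the classes are nested), the threshold is at most $C_2\sqrt C\,\sigma_*\sqrt{L/(n\alpha_n)}$. Hence $\hat T=1$ as soon as
\[
\rho_n\ge (C_2\sqrt C+C_1)\,\sigma_*\sqrt{L/(n\alpha_n)} ,
\]
using $\alpha_n\le1$. Adjusting $\delta/2$ versus $\delta$ changes only constants, so the total risk is at most $\delta$.

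\emph{Necessity.} We show that if $\rho_n\le c\,\sigma_*\sqrt{L/(n\alpha_n)}$ then every test has risk greater than $\delta$. Following step 4 of the roadmap, set $\tau=L/n$ and, for a parameter $A$ to be tuned, consider
\[
P_0=(1-\tau)\mathcal N(A,1)+\tau\delta_{z_0},\qquad z_0=-A(1-\tau)/\tau ,
\]
which has mean $0$, and
\[
P_1=(1-\tau)\mathcal N(A,1)+\tau\delta_0 ,
\]
which has mean $(1-\tau)A$.

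\emph{Computations.} The variance of $P_0$ is of order $1+A^2/\tau$, and the variance of $P_1$ is of order $1$. Choose $A\asymp\sqrt{\tau/\alpha_n}=\sqrt{L/(n\alpha_n)}$. Then $\sigma_0^2\asymp 1/\alpha_n$, since $A^2/\tau\asymp1/\alpha_n$ dominates.

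\emph{Class membership of $P_0$.} For $\xi_0=X/\sigma_0$, the Gaussian part satisfies $\xi_0^2\approx\alpha_n(\cdot)$, which is well below the truncation level $n\gamma/L$ for $\gamma\approx\alpha_n$ once the constants are arranged. The atom sits at
\[
\xi_0^2=z_0^2/\sigma_0^2\asymp \frac{A^2/\tau^2}{A^2/\tau}=\frac1\tau=\frac nL ,
\]
which exceeds $n\alpha_n/L$ by a constant factor. The truncated second moment at level $\gamma=c'\alpha_n$ is therefore about $(1-\tau)(1+A^2)/\sigma_0^2\asymp\alpha_n$. Tuning constants gives $\alpha_{n,\delta}(\xi_0)\ge\alpha_n$; if needed, $A$ can be rescaled so the bound holds exactly.

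\emph{Class membership of $P_1$.} For $P_1$ the atom is at $0$, so after standardization essentially all of the variance lies at bounded values and $\alpha_{n,\delta}\approx1>\gamma_n$. We need $\gamma_n$ bounded away from $1$, or a slight enlargement of the Gaussian spread; I would adjust by mixing in a small perturbation if necessary.

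\emph{Rescaling.} Scale $P_1$ so that $\sigma=\sigma_*$, and scale $P_0$ by the same factor; Type I error is allowed at any $\sigma$. The mean separation is then
\[
(1-\tau)A\,\sigma_*/\sigma_1\asymp \sigma_*\sqrt{L/(n\alpha_n)} .
\]
Taking $A$ as a small multiple makes this at least $\rho_n$ whenever $\rho_n\le c\,\sigma_*\sqrt{L/(n\alpha_n)}$.

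\emph{Indistinguishability.} Both $P_0^{\otimes n}$ and $P_1^{\otimes n}$ dominate $(1-\tau)^n\,\mathcal N(A,1)^{\otimes n}$, and
\[
(1-\tau)^n\ge e^{-2L}=\delta^2
\]
for $\tau\le1/2$. Hence for any test,
\[
P_0(\hat T=1)+P_1(\hat T=0)\ge 1-\mathrm{TV}\ge(1-\tau)^n .
\]
To get a bound exceeding $\delta$ rather than $\delta^2$, take $\tau=c''L/n$ with $c''<1$. Then $(1-\tau)^n\ge \delta^{c''(1+o(1))}>\delta$, and all the computations above change only by constants.

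\emph{Main obstacle.} The delicate step is the simultaneous bookkeeping of $\alpha_{n,\delta}$ for both laws. $P_0$ must have $\alpha$ at least $\alpha_n$ but not much larger; that is what forces the separation to scale as $1/\sqrt{\alpha_n}$. $P_1$ must lie in $\m P_{\gamma_n}$, all while keeping the overlap above $\delta$. I would first verify the $P_0$ truncation calculation with explicit constants, choosing $\tau=L/(2n)$, and then fine-tune $A$.
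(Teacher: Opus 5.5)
Your proposal is correct and follows the paper's proof essentially step for step. For sufficiency you use the same test, built from the median-of-means estimator $\wh\mu$ and the blockwise scale estimator of Proposition~\ref{prop:sigma_estimator}. For necessity you use the same pair of mixtures, $(1-\tau)\mathcal N(A,1)+\tau\delta_{z_0}$ versus $(1-\tau)\mathcal N(A,1)+\tau\delta_0$ with $A\asymp\sqrt{\log(1/\delta)/(n\alpha_n)}$, together with the common-component overlap $(1-\tau)^n$.

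Your bookkeeping is in places tighter than the paper's:
\begin{itemize}
\item You choose $C_2=C_1/\sqrt c$, so the Type~I threshold really dominates $|\wh\mu|$.
\item You take $\tau=c''\log(1/\delta)/n$ with $c''<1$, which is needed because $(1-\log(1/\delta)/n)^n\le\delta$.
\item You tune $A$ so that $\alpha_{n,\delta}(\xi_0)\ge\alpha_n$ rather than only $c\,\alpha_n$.
\end{itemize}

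Like the paper, you only place $P_1$ in $\m P_{\gamma_n}$ for $\gamma_n$ bounded away from $1$. Replacing $\mathcal N(A,1)$ by a bounded component, such as the law of $A+\varepsilon$ with Rademacher $\varepsilon$ as in the paper's appendix, gives $\alpha_{n,\delta}=1$ for the alternative and removes that restriction. Enlarging the Gaussian spread, as you suggest, would not do this, since any unbounded law has $\alpha_{n,\delta}<1$.
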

In that case we show (via explicitly constructing a test function) that in order to achieve a testing error of order $\delta$, the optimal separation must be of order $\rho_n^*:= \sigma_*\sqrt{\frac{\log(1/\delta)}{n \alpha_n}}$. Note that the bound is uniform over $\sigma>0$. 
Observe also that we recover the impossibility result as $n\alpha_n \underset{n\to \infty}{\to} 0$. In other words, in order to get shrinking confidence intervals, we need to ensure that  $n\alpha_n \underset{n\to \infty}{\to} \infty$. 
Finally, to understand the role of parameter $\gamma_n$, we refer the reader to the previously stated Remark \ref{remark:1} (c).

Duality between existence of tests and confidence sets immediately implies the following lemma.
\begin{lemma}
    For any $\rho_n$ such that that $\mathcal{R}(n,\rho_n,\sigma_*,\alpha_n,\gamma_n,\delta) \leq \delta$, there exists a confidence interval with coverage probability at least $1-\delta$ and length at most $\rho_n$, also with probability at least $1-\delta$. Alternatively, if we can find an adaptive confidence interval of level $\delta$ such that its length is uniformly less than $\rho_n$ with probability at least $1-\delta$, then $\mathcal{R}(n,\rho_n,\sigma_*,\alpha_n,\gamma_n,\delta) \leq \delta$.
\end{lemma}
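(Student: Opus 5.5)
The plan is to prove both directions by the standard test inversion, applied to the shifted samples $X_1-m,\ldots,X_n-m$ for $m\in\mathbb{R}$. The key observation is that the classes $\mathcal{P}_{\alpha_n}(n,\delta)$ and $\mathcal{P}_{\gamma_n}(n,\delta)$ constrain only the standardized law $P_\xi$. They are therefore invariant under translation of $X$, so testing $\mu=m$ reduces to testing $\mu=0$ on the shifted data.

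\emph{From tests to intervals.} Suppose $\mathcal{R}(n,\rho_n,\sigma_*,\alpha_n,\gamma_n,\delta)\le\delta$. Then there is a test $\hat T$, possibly after replacing $\delta$ by a constant multiple, whose Type I error is at most $\delta$ uniformly over $\sigma>0$ and $P_\xi\in\mathcal{P}_{\alpha_n}(n,\delta)$. Its Type II error is at most $\delta$ whenever $|\mu|\ge\rho_n$, $\sigma=\sigma_*$ and $P_\xi\in\mathcal{P}_{\gamma_n}(n,\delta)$. Define
\[
\wh C_n=\{m\in\mathbb{R}:\hat T(X_1-m,\ldots,X_n-m)=0\}.
\]
\begin{itemize}
\item \emph{Coverage.} Since $X_i-\mu$ has mean $0$ and the same $P_\xi$, we get $\pr{\mu\notin\wh C_n}=\pr{\hat T(X-\mu)=1}\le\delta$ uniformly over $\mathcal{P}_{\alpha_n}(n,\delta)$ and all $\sigma>0$.
\item \emph{Length.} Any $m$ with $|m-\mu|\ge\rho_n$ lies in the alternative for the shifted data, so $\pr{m\in\wh C_n}\le\delta$.
\end{itemize}

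The length bound is the main obstacle. The Type II control holds only pointwise in $m$, while we need $\wh C_n\subseteq[\mu-\rho_n,\mu+\rho_n]$ (up to a constant factor) with probability $1-\delta$, which is a simultaneous statement over all $m$. I would handle this in one of two ways.

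\begin{itemize}
\item \emph{Preferred route: use the explicit test.} The test from the proof of Proposition \ref{prop:alpha} has the form $\mathbf{1}\{|\wh\mu(X)|\ge \wh\sigma_{\mathrm{MoM}}(X)\, t\}$, built from a translation-equivariant estimator $\wh\mu$ and a translation-invariant scale $\wh\sigma_{\mathrm{MoM}}$, with $t=C\sqrt{\log(1/\delta)/(n\alpha_n)}$. The inverted set is then the interval $[\wh\mu-\wh\sigma_{\mathrm{MoM}} t,\ \wh\mu+\wh\sigma_{\mathrm{MoM}} t]$. Its length is $2\wh\sigma_{\mathrm{MoM}} t\lesssim\sigma_*\sqrt{\log(1/\delta)/(n\alpha_n)}\asymp\rho_n$ on the single event of Proposition \ref{prop:sigma_estimator}, which has probability at least $1-\delta$.
\item \emph{General fallback: a finite grid.} Use the convex hull of $\wh C_n$ intersected with a grid of spacing $\rho_n$. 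Check the two endpoints $\mu\pm\rho_n$, which requires replacing $\delta$ by $\delta/2$. I would argue monotonicity in $m$ for tests of threshold type, so that rejection at $\mu\pm\rho_n$ implies rejection further out.
\end{itemize}

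\emph{From intervals to tests.} Given an adaptive interval $\widetilde C_n$ with coverage $1-\delta$ over $\mathcal{P}_{\alpha_n}(n,\delta)$, and length below $\rho_n$ with probability $1-\delta$ over $\mathcal{P}_{\gamma_n}(n,\delta)$ with $\sigma=\sigma_*$, set
\[
\hat T=\mathbf{1}\{0\notin\widetilde C_n\}.
\]
\begin{itemize}
\item \emph{Type I.} Under $\mu=0$ the error is $\pr{0\notin\widetilde C_n}\le\delta$, for every $\sigma>0$.
\item \emph{Type II.} If $|\mu|\ge\rho_n$, then $0\in\widetilde C_n$ requires either $\mu\notin\widetilde C_n$ or $|\widetilde C_n|\ge|\mu|\ge\rho_n$.
\end{itemize}
For the first alternative in the Type II bound we need coverage at the alternative, which needs $\mathcal{P}_{\gamma_n}(n,\delta)\subseteq\mathcal{P}_{\alpha_n}(n,\delta)$. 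This inclusion holds by the nesting, since $\gamma_n>\alpha_n$. Hence the Type II error is at most $2\delta$. The total risk is then at most $3\delta$, and I would absorb constants by rescaling $\delta$, consistent with the ``of order $\delta$'' convention used in Proposition \ref{prop:alpha}.
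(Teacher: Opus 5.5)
Your proposal is correct in the up-to-constants sense in which the paper states this lemma. For the converse direction it is exactly the duality the paper uses: the test $\mathbf{1}\{0\notin\widetilde C_n\}$ is the one the paper applies to $\hat I(\delta,\alpha_n)$. The nesting $\mathcal{P}_{\gamma_n}(n,\delta)\subseteq\mathcal{P}_{\alpha_n}(n,\delta)$ is the right way to get coverage at the alternatives. This direction yields risk $3\delta$, not $\delta$. Since $\delta$ also indexes the classes, ``rescaling $\delta$'' changes them and is not literally free, so the honest conclusion is the ``of order $\delta$'' one the paper adopts.

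For the forward direction the paper gives nothing beyond ``duality immediately implies'', and you are right that it is not immediate. Inverting an arbitrary test excludes each fixed far point $m$ with probability $1-\delta$ separately, which does not control the diameter of $\{m:\hat T(X_1-m,\ldots,X_n-m)=0\}$. Even with simultaneous control, that set sits in $(\mu-\rho_n,\mu+\rho_n)$, so the best possible is length $2\rho_n$, not $\rho_n$.

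Your preferred fix works, but it is a different argument from duality. It ignores the hypothesized test and uses the explicit interval from Propositions \ref{prop:alpha} and \ref{prop:sigma_estimator}. The hypothesis $\mathcal{R}(n,\rho_n,\sigma_*,\alpha_n,\gamma_n,\delta)\le\delta$ then enters only through the necessity half of Proposition \ref{prop:alpha}. That half gives $\rho_n\gtrsim\sigma_*\sqrt{\log(1/\delta)/(n\alpha_n)}$, and hence length $\lesssim\rho_n$. Your ``$\asymp\rho_n$'' silently uses this and should cite it, because an admissible $\rho_n$ may be much larger than the optimal rate. What your route buys is an actual proof. What it costs is that the lemma becomes a corollary of the paper's matching upper and lower bounds rather than a statement about arbitrary tests.

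Drop the grid fallback, for four reasons:
\begin{itemize}
\item Restricting to a fixed grid can destroy coverage, since the true $\mu$ is generally not a grid point and $\widehat C_n$ may contain no grid point near it.
\item The points $\mu\pm\rho_n$ you propose to check are not grid points.
\item Excluding every grid point at distance at least $\rho_n$ needs a union bound over infinitely many points.
\item Monotonicity in $m$ holds only for threshold-type tests, which just brings you back to the preferred route.
\end{itemize}
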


\section{On the construction of optimal adaptive confidence intervals}
For this section, we assume that $n\alpha_n = \Omega(\log(1/\delta))$. Notably, this condition is not significantly more restrictive than $n = \Omega(\log(1/\delta))$, as emphasized earlier, since $n\alpha_n$ can be interpreted as the ``effective sample size" derived from the data. 

\subsection{An estimator of the variance}

To construct an estimator of the variance, we need an initial estimator of the mean. To this end we choose an estimator $\hat{\mu}$ of $\mu$ such that $|\hat{\mu}- \mu| \leq 10 \sigma \sqrt{\frac{\log(1/\delta)}{n}}$ with probability $1-\delta$. While there are several viable candidates in the literature, we choose the most well-known and arguably the simplest median of means (MoM) estimator \citep{nemirovskij1983problem}.  
%or the self-normalized sum estimator \cite{minsker2021robust} are known to achieve such a guarantee with a probability of at least $1-\delta$. 
In this section, we assume a sample of size $n$ is provided to construct $\hat{\mu}$ and $\hat{\sigma}$. By avoiding sample splitting, we utilize the full sample for both estimators and maintain maximum efficiency. Next, we consider the estimator of $\sigma^2$ constructed as follows.
\begin{definition}
Let \( X_1, \dots, X_n \) be a sequence of i.i.d copies of a random variable \( X \) with mean \( \mu \) and finite variance \( \sigma^2 \). We partition the index set \([n]\) into \( K \) disjoint subsets \( (A_i^n)_{i \in [K]} \), where each subset \( A_i^n \) contains exactly \( m \) elements, i.e.,  
\[
(A_i^n)_{i \in [K]} \subset \left\{ J \subset [n] \mid |J| = m \right\}.
\]
The parameter \( m \) satisfies \( m \leq \lfloor n/K \rfloor \), and \( K \) is an integer such that \( K \leq \lfloor n/2 \rfloor \). 
For each subset \( A_i^n \), we define \( \hat{\sigma}_i^2 \) to be the usual empirical variance estimator of \( \sigma^2 \) for a given estimator \( \hat{\mu} \) of the mean \( \mu \), that is, 
\[
\forall i \in [K], \quad \hat{\sigma}_i^2 := \frac{1}{m} \sum_{j \in A_i^n} (X_j - \hat{\mu})^2.
\]
Finally, we define the median-of-means estimator via  
\[
(\hat{\sigma}_{\mathrm{MoM}}^{K})^2 := \operatorname{med}(\hat{\sigma}_1^2, \dots, \hat{\sigma}_K^2).
\]
\end{definition}

\begin{proposition}
\label{prop:sigma_estimator}
    Assume that $n\alpha_n = \Omega(\log(1/\delta))$ and that $P_\xi \in \mathcal{P}_{\alpha_n}(n,\delta)$. Let $\hat{\mu}$ be an estimator of $\mu$ satisfying $|\hat{\mu}- \mu| \leq 10 \sigma \sqrt{\frac{\log(1/\delta)}{n}}$ with probability at least $1-\delta$. Then the event 
    \[
    \mathbf{O}:= \left\{ \frac{\alpha_n}{4} \leq \frac{(\hat{\sigma}_{\mathrm{MoM}}^{K})^2 }{\sigma^2}\leq 10 \right\}
    \]
    occurs with probability at least $1-\delta/2$ whenever  $K=C\log(1/\delta)$ for an absolute constant $C>0$.
\end{proposition}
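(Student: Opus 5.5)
We work on the event $\mathbf{E}:=\{|\hat\mu-\mu|\le 10\sigma\sqrt{\log(1/\delta)/n}\}$ and reduce each $\hat\sigma_i^2$ to quantities that do not involve $\hat\mu$. Write $X_j-\hat\mu=\sigma\xi_j+(\mu-\hat\mu)$. Since $\hat\mu$ is computed from the whole sample, we cannot condition on it. Instead we use deterministic inequalities valid for any shift $t=\mu-\hat\mu$. For the upper bound, $(a+b)^2\le 2a^2+2b^2$ gives
\[
\hat\sigma_i^2\le \frac{2\sigma^2}{m}\sum_{j\in A_i^n}\xi_j^2+2t^2 .
\]
For the lower bound we truncate. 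Set $\tau^2:=n\alpha_n/\log(1/\delta)$ and keep only indices with $\xi_j^2\le\tau^2$. On those indices $(\sigma\xi_j+t)^2\ge \tfrac12\sigma^2\xi_j^2-t^2$, so
\[
\hat\sigma_i^2\ge \frac{\sigma^2}{2m}\sum_{j\in A_i^n}\xi_j^2\mathbf 1(\xi_j^2\le\tau^2)-t^2 .
\]
On $\mathbf E$ we have $t^2\le 100\sigma^2\log(1/\delta)/n$. The assumption $n\alpha_n\ge C'\log(1/\delta)$ with $C'$ large makes this at most a small multiple of $\alpha_n\sigma^2$, say $\le \alpha_n\sigma^2/100$. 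It is also $\le\sigma^2/100$. So the shift terms are negligible on both sides.

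Next come the blockwise bounds on the $\hat\mu$-free statistics. Take $m=\lfloor n/K\rfloor$ with $K=C\log(1/\delta)$.

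\textbf{Upper bound.} By Markov, $\pr{\frac1m\sum_{A_i}\xi_j^2>4.5}\le 1/4.5<1/4$. So each block satisfies $\hat\sigma_i^2\le 9\sigma^2+2t^2\le10\sigma^2$ except with probability below $1/4$.

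\textbf{Lower bound.} Let $Y_j=\xi_j^2\mathbf 1(\xi_j^2\le\tau^2)$. By definition of $\alpha_{n,\delta}$ and $\alpha_{n,\delta}(\xi)\ge\alpha_n$ (use $\gamma$ slightly below the sup and monotonicity in $\gamma$ if needed), $\mathbf E Y\ge\alpha_n$. Moreover $0\le Y\le\tau^2$, so $\mathrm{Var}(Y)\le\tau^2\mathbf EY$. Chebyshev then gives
\[
\pr{\tfrac1m\textstyle\sum Y_j<\tfrac12\mathbf EY}\le \frac{4\tau^2}{m\,\mathbf EY}\le\frac{4 n\alpha_n/\log(1/\delta)}{(n/(2K))\alpha_n}=8C .
\]
We need this below $1/4$, which forces $C\le1/32$. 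This is compatible with $K$ of order $\log(1/\delta)$, and we need $m\ge1$, which follows from $n\ge n\alpha_n\gtrsim\log(1/\delta)$. Then $\hat\sigma_i^2\ge\frac{\sigma^2}{4}\alpha_n-t^2$. This undershoots $\alpha_n/4$ slightly, so we adjust the truncation constant: take $\tau^2$ as above but use the estimate $(a+b)^2\ge(1-\epsilon)a^2-(\epsilon^{-1}-1)b^2$ and a Chebyshev threshold $(1-\epsilon)\mathbf EY$. The cost is a larger constant in Chebyshev, absorbed by a smaller $C$ and a larger $C'$. The result is $\hat\sigma_i^2\ge\alpha_n\sigma^2/4$ with probability $\ge 3/4$.

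For aggregation, the blocks are disjoint, hence independent. The median exceeds $10\sigma^2$ only if at least $K/2$ blocks fail the upper bound, and by Hoeffding on Bernoulli($<1/4$) counts this has probability $\le e^{-K/8}$. The same holds for the lower bound. Choosing $C$ compatible with both constraints (small enough for Chebyshev, large enough that $2e^{-K/8}\le\delta/4$) is the \textbf{main obstacle}: the two requirements on $C$ pull in opposite directions. I expect to resolve it by decoupling. The truncation level $\tau^2$ can be taken proportional to $m\alpha_n/c_0$ with a small $c_0$; using monotonicity of $\gamma\mapsto\mathbf E\xi^2\mathbf 1(\xi^2\le n\gamma/L)$ and a comparison between truncation levels, this still gives $\mathbf EY\gtrsim\alpha_n$ at a slightly reduced level. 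The Chebyshev bound then depends on $c_0$ rather than on $C$. Failing that, a Bernstein bound can replace Chebyshev.

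Finally, a union with $\mathbf E^c$ completes the count. The deterministic reductions were applied only on $\mathbf E$. Taking $\hat\mu$ at level $\delta/4$, which only changes the constant $10$ harmlessly, the total failure probability is $\le\delta/4+\delta/4=\delta/2$.
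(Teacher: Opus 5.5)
Your plan follows the paper's route: reduction on the event for $\hat\mu$, truncation, blockwise concentration, median amplification. Your upper-tail argument (Markov at level $4.5$, then Hoeffding) is sound, and cleaner than the paper's, which simply posits $\mathbb{P}(S_k>4)\le 1/8$ ``for $n$ large''.

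The genuine gap is the obstacle you flag yourself, and neither proposed fix closes it.

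\emph{Lower truncation.} To make the Chebyshev ratio $\tau^2/(m\,\mathbf{E}Y)$ a small constant, you must truncate at a level of order at most $m\alpha_n=n\alpha_n/(C\log(1/\delta))$. That is a factor of order $C$ below the level $n\gamma/\log(1/\delta)$ at which the definition of $\alpha_{n,\delta}$ gives any information. Since $\gamma\mapsto\mathbf{E}\,\xi^2\mathbf{1}(\xi^2\le n\gamma/\log(1/\delta))$ is nondecreasing, monotonicity only transfers lower bounds upward, and the truncated moment at the lower level can be $0$.

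\emph{Bernstein.} For $0\le Y\le\tau^2$ its lower-tail exponent is of order $m\,\mathbf{E}Y/\tau^2$. At the informative level this is of order $m\log(1/\delta)/n=1/C$, so the blockwise bound is only $e^{-c/C}$. The median step over $K=C\log(1/\delta)$ blocks then gives at best a fixed power $\delta^{c'}$ with $c'<1$.

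\emph{Truncation level.} Even your Chebyshev step should truncate at the witness level $n\gamma^\ast/\log(1/\delta)$ with $\gamma^\ast=\alpha_{n,\delta}(\xi)$, where the supremum is attained by monotonicity. The bound $\alpha_{n,\delta}(\xi)\ge\alpha_n$ does not give $\mathbf{E}Y\ge\alpha_n$ when you truncate at $n\alpha_n/\log(1/\delta)$.

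This is not a technicality: the proposition fails as stated. Let $L=\log(1/\delta)$ and let $\mathbb{P}(\xi=\sqrt{n/L})=\mathbb{P}(\xi=-\sqrt{n/L})=L/(2n)$, with $\xi=0$ otherwise.
\begin{itemize}
\item Then $\mathbf{E}\xi=0$, $\mathbf{E}\xi^2=1$ and $\alpha_{n,\delta}(\xi)=1$, so $P_\xi\in\mathcal{P}_{\alpha_n}(n,\delta)$ for every $\alpha_n\le 1$.
\item With $K=CL$ blocks, the probability that $\lfloor K/2\rfloor+1$ fixed blocks contain only observations equal to $\mu$ is at least $(1-L/n)^{n/2+n/K}$, roughly $e^{-1/C}\sqrt{\delta}$.
\item On this event, more than half of the $\hat\sigma_i^2$ equal $(\hat\mu-\mu)^2$. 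On the event for $\hat\mu$ this is at most $100\sigma^2L/n<\alpha_n\sigma^2/4$ once $n\alpha_n>400L$.
\item Hence $\mathbb{P}(\mathbf{O}^c)\gtrsim e^{-1/C}\sqrt{\delta}-\delta>\delta/2$ for small $\delta$, whatever the absolute constant $C$.
\end{itemize}
The mechanism is that membership in the class only guarantees about $\log(1/\delta)$ informative observations in the whole sample, and half the sample misses all of them with probability about $\sqrt{\delta}$.

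The paper's proof hits the same wall without resolving it. Its Bernstein bound $\exp(-m\log(1/\delta)/(C'n))=\exp(-1/(C'C))$ is declared to be at most $1/8$ ``as long as $K$ is of order $\log(1/\delta)$''. Its Hoeffding steps, however, require $K\ge150\log(1/\delta)$, and the two requirements on $C$ are incompatible.

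What your argument does prove is one of two weaker statements:
\begin{itemize}
\item the two-sided bound with failure probability of order $\delta^{c}$ for a small absolute $c>0$ (take $C$ small); or
\item the stated level under the stronger hypothesis $P_\xi\in\mathcal{P}_{\alpha_n}(n,\delta^{c_0})$ for a large absolute $c_0$. This lowers the truncation level by the factor $c_0$ and is exactly the decoupling you were after, but it must be assumed rather than derived.
\end{itemize}

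Finally, assuming $\hat\mu$ at level $\delta/4$ changes the hypothesis. With $\hat\mu$ guaranteed only at level $\delta$, no union bound yields $1-\delta/2$; the paper's own proof aims at $1-2\delta$.
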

\begin{remark}
    Note that only the lower bound in the inequality depends on $\alpha_n$. It means that we can construct a \emph{lower} bound for $\sigma^2$ that is valid uniformly over $\m P_2$. However, the upper bound requires the lower bound for $\alpha_n$. 
\end{remark}
The estimator of the variance constructed above is actually minimax optimal in the sense that we cannot get better estimates of the variance within the class $\mathcal{P}_{\alpha_n}$. The next result proposes a matching lower bound.
\begin{proposition}
\label{prop:sigma_lower}
    Assume that $n\alpha_n = \Omega(\log(1/\delta))$. Then there exists an absolute constant $c>0$ such that no estimator of the variance $\tilde{\sigma}^2$ can satisfy the inequality
    \[
    \inf\limits_{\sigma>0,\mu\in\mb R, P_\xi\in \m P_{\alpha_n}(n,\delta)}\mathbb{P}\left( c \alpha_n \leq  \tilde{\sigma}^2/\sigma^2 \leq 1\right)\geq 1-\delta.
    \]
\end{proposition}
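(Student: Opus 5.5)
We argue by contradiction through a two-point (two-model) indistinguishability construction, in the spirit of the lower bound pair described in step 4 of the roadmap. Write $L=\log(1/\delta)$ and suppose some $\tilde\sigma^2$ satisfies $c\alpha_n\le \tilde\sigma^2/\sigma^2\le 1$ with probability at least $1-\delta$ uniformly over $\mathcal P_{\alpha_n}(n,\delta)$. We will build two distributions $Q_0,Q_1$, both with standardized laws in $\mathcal P_{\alpha_n}(n,\delta)$, whose variances satisfy $\sigma_0^2/\sigma_1^2 \ge C'/\alpha_n$ for a large constant $C'$ (so that $c\alpha_n\sigma_0^2 > \sigma_1^2$ once $c$ is small compared to $C'$), and such that the $n$-fold products share a common component of mass exceeding $2\delta$. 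Then the two required events $\{\tilde\sigma^2\ge c\alpha_n\sigma_0^2\}$ and $\{\tilde\sigma^2\le \sigma_1^2\}$ are disjoint, yet on the common component they must both hold with conditional probability close to $1$, which is a contradiction.

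\textbf{Construction.} Take $\tau = L/(2n)$ and mixtures sharing a dominant part: $Q_1 = (1-\tau)\mathcal N(0,1)+\tau\,\mathcal N(0,1)=\mathcal N(0,1)$, which is trivially in every class with $\alpha\le$ some absolute constant (by the remark that $\mathbb P(1/2\le\xi^2\le4)\ge 1/2$ gives $\alpha\ge 1/4$), and $Q_0=(1-\tau)\mathcal N(0,1)+\tau R$, where $R=\frac12(\delta_{b}+\delta_{-b})$ is symmetric, so that the mean is $0$. Choose $b$ with $\tau b^2 \asymp 1/\alpha_n$, i.e.\ $b^2\asymp n/(L\alpha_n)$. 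Then $\sigma_0^2 = (1-\tau)+\tau b^2 \asymp 1/\alpha_n$ while $\sigma_1^2=1$. The common event ``no observation comes from $R$'' has probability $(1-\tau)^n\ge e^{-2\tau n}\cdot(\text{const}) = \delta\cdot \text{const}$ for $\tau\le 1/2$. To get the overlap above $2\delta$ instead of merely of order $\delta$, take $\tau = L/(4n)$, giving $(1-\tau)^n\ge \delta^{1/2}\ge 2\delta$ for small $\delta$. Adjusting constants in this way is routine.

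\textbf{Class membership (main obstacle).} We must check that $\alpha_{n,\delta}(\xi_0)\ge\alpha_n$ for $\xi_0=X/\sigma_0$, where $X\sim Q_0$. The spike values satisfy $\xi_0^2=b^2/\sigma_0^2\asymp \tau^{-1}= 4n/L$, which lies below the truncation level $n\gamma/L$ only if $\gamma\gtrsim 1$. So for $\gamma=\alpha_n$ the spike is truncated away, and the truncated moment is just the Gaussian part, $(1-\tau)/\sigma_0^2\asymp \alpha_n$. Tuning the constant in $b$ (taking $\tau b^2 = 1/(2\alpha_n)$, say) makes $\mathbf E\,\xi_0^2\mathbf 1(\xi_0^2\le n\alpha_n/L)\ge \alpha_n$. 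Here the Gaussian mass beyond level $n\alpha_n/(L\sigma_0^2)\cdot\sigma_0^2$ is negligible because $n\alpha_n\ge C L$; this is exactly where the hypothesis $n\alpha_n=\Omega(L)$ enters. We must also ensure $\gamma=\alpha_n$ lies in the set $\mathcal A$ rather than merely near it, which requires matching the constant carefully. This calibration is the delicate step.

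\textbf{Conclusion.} Use scale invariance: the requirement is uniform over $\sigma$, so the distributions can be used as given. On the common event $E$, with $\mathbb P(E)\ge 2\delta$, the sample has the same conditional law (i.i.d.\ $\mathcal N(0,1)$) under both models. Hence $\mathbb P_{Q_0}(\tilde\sigma^2\ge c\alpha_n\sigma_0^2\mid E)$ and $\mathbb P_{Q_1}(\tilde\sigma^2\le 1\mid E)$ are each at least $1-\delta/\mathbb P(E)\ge 1/2$. These are the conditional probabilities of two disjoint events under the same conditional law, and they are both at least $1/2$, with strict inequality after the constants are adjusted. This gives the contradiction once $c\alpha_n\sigma_0^2>1$.
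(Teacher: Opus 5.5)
Your proposal follows the same route as the paper: two models share a dominant component of mass $1-\tau$ with $\tau\asymp\log(1/\delta)/n$. A rare component carries most of the variance of one model, and the common event of probability $(1-\tau)^n$ forces the contradiction. The paper pairs $(1-\tau)\mathcal N(A,\alpha_n)+\tau\delta_{z_0}$ (variance $\asymp 1$) with $(1-\tau)\mathcal N(A,\alpha_n)+\tau\delta_0$ (variance $\asymp\alpha_n$). Your symmetric spike, which keeps both means at $0$, is a cosmetic variant.

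In two respects your bookkeeping is tighter than the paper's sketch. First, with $\tau=L/(4n)$ the common event has mass at least $\delta^{1/2}>2\delta$, which genuinely contradicts a failure probability $\le\delta$ under both models. With $\tau=L/n$ one only has $(1-L/n)^n\le e^{-L}=\delta$, and the paper's final ``average error $\ge\delta/2$'' is not a contradiction. Second, you correctly reduce disjointness to $c\alpha_n\sigma_0^2>\sigma_1^2$. The paper's ranges $[c_3\alpha_n,1]$ and $[c_3\alpha_n^2,\alpha_n]$ overlap whenever $c_3<1$.

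One point is backwards, however: $C'$ cannot be made large. If $\sigma_0^2\gg 1$, the spike carries most of the variance, so its standardized square is $b^2/\sigma_0^2\approx 1/\tau=4n/L$. That lies above every truncation level $n\gamma/L$ with $\gamma\le 1$. Membership of $Q_0$ therefore comes from the Gaussian part alone, which forces $(1-\tau)/\sigma_0^2\ge\alpha_n$, i.e.\ $\sigma_0^2\le(1-\tau)/\alpha_n$. Hence $C'<1$; your choice $\tau b^2=1/(2\alpha_n)$ gives $C'\approx 1/2$. The condition $c\alpha_n\sigma_0^2>1$ then requires $c>1/C'$, so $c$ must be large, not ``small compared to $C'$''.

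This is harmless, because the proposition only asserts the existence of some absolute $c$; take $c=4$.
\begin{itemize}
\item For $\alpha_n\le 1/4$: $4\alpha_n\sigma_0^2=2+4\alpha_n(1-\tau)>1$. The truncated moment at $\gamma=\alpha_n$ is at least $\tfrac43(1-\tau)(1-\epsilon)\alpha_n\ge\alpha_n$, with $\epsilon=\mathbb E[Z^2\mathbf 1(Z^2>n/(2L))]$. So there is slack, and no delicate calibration is needed.
\item For $\alpha_n>1/4$: the target range $[4\alpha_n,1]$ is empty and the claim is trivial. You need this case split, because your membership inequality $2(1-\tau)(1-\epsilon)\ge 1+2\alpha_n(1-\tau)$ fails once $\alpha_n$ is close to $1/2$.
\end{itemize}

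Finally, do not cite the remark $\mathbb P(1/2\le\xi^2\le 4)\ge 1/2$ to place $Q_1$ in the class: for a standard normal this probability is about $0.43$. Check directly instead that $\mathbb E[Z^2\mathbf 1(Z^2\le n\alpha_n/L)]\ge\alpha_n$, which holds since $n\alpha_n/L\ge C$.
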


Combining both results provides minimax optimal confidence intervals for estimation of the variance $\sigma^2$. Notice that the optimal estimator $(\hat{\sigma}_{\mathrm{MoM}}^{K})^2$ can be employed beyond scenarios where $\alpha_n$ is a constant bounded away from $0$. 

\subsection{An optimal confidence interval}

For $\delta>0$ and $K=C\log(1/\delta)$, let $\hat{\mu}(\delta)$ and $\hat{\sigma}_{\mathrm{MoM}}^{K}$ be the estimators defined above. Consider the confidence interval 
\[
\hat{I}(\delta,\alpha_n) = \left[ \hat{\mu}(\delta) - 5\hat{\sigma}_{\mathrm{MoM}}^{K}\sqrt{\frac{\log(1/\delta)}{\alpha_n n}} , \hat{\mu}(\delta) + 5\hat{\sigma}_{\mathrm{MoM}}^{K}\sqrt{\frac{\log(1/\delta)}{\alpha_n    n}}\right].
\]
Then $\hat{I}(\delta,\alpha_n)$ leads to a minimax optimal test with respect to the risk $\mathcal{R}(n,\rho_n,\sigma_*,\alpha_n,\gamma_n,\delta)$: indeed, $\hat T$ defined via
\[
\hat{T}=\mathbf{1}\{0 \not\in \hat{I}(\delta)\},
\]
 achieves desired optimality. It is also clear that $|\hat{I}(\delta,\alpha_n)| \leq C\sigma\sqrt{\frac{\log(1/\delta)}{n\alpha_n}}$ with probability at least $1-\delta$. Unlike the problem of variance estimation, constructing $\hat{I}(\delta, \alpha_n)$ requires prior knowledge of the lower bound $\alpha_n$. On the one hand, our result remains adaptive to the choice of $\alpha_{n, \delta}(\xi)$ (conditionally on having access to its lower bound). On the other hand,  we showed that constructing adaptive confidence intervals is impossible without a specified lower bound on $\alpha_n$. This clarifies the cost of adaptation: while we do not need to know the exact distribution $P$ or the variance $\sigma^2$, the ``effective sample size'' $n\alpha_n$ dictates the fundamental limits of uncertainty quantification.

\section{Numerical experiments}

\begin{figure}[!ht]
    \centering

    \begin{subfigure}[b]{0.43\textwidth}
        \centering
        \resizebox{\textwidth}{!}{%
            \begin{tikzpicture}
    \begin{axis}[
        width=11cm, height=10cm,
        xlabel={$\delta$},
        ylabel={$\mathbf{E}|\hat{I}|$},
        xtick={0.01,0.04,...,0.19},
        grid=major,
        grid style={gray!30},
        axis lines=left,
        legend style={at={(0.5,-0.15)}, anchor=north, legend columns=3}
    ]
    \addplot[
        color=blue,
        thick,
        mark=o
    ]
    table[
        col sep=comma,
        x=delta,
        y=mean length
    ] {dllo_length_a1.csv};
    \addlegendentry{DLLO}
    \addplot[
        color=red,
        thick,
        mark=x
    ]
    table[
        col sep=comma,
        x=delta,
        y=mean length
    ] {ours_length_a1.csv};
    \addlegendentry{Our Approach}
    \addplot[
        color=green!50!black,
        thick,
        mark=triangle*
    ]
    table[
        col sep=comma,
        x=delta,
        y=mean length
    ] {student_length_a1.csv};
    \addlegendentry{Student-$t$}
    \end{axis}
\end{tikzpicture}
        }
        \caption{Mean length: $\alpha=1$.}
        \label{fig::mean_length_alpha_1}
    \end{subfigure}
    \hfill
    \begin{subfigure}[b]{0.43\textwidth}
        \centering
        \resizebox{\textwidth}{!}{%
            \begin{tikzpicture}
    \begin{axis}[
        width=11cm, height=10cm,
        xlabel={$\delta$},
        ylabel={$\mathbf{P}\left(\mu \notin \hat{I} \right)$},
        xtick={0.01,0.04,...,0.19},
        grid=major,
        grid style={gray!30},
        axis lines=left,
        legend style={at={(0.5,-0.15)}, anchor=north, legend columns=3}
    ]
    \addplot[
        color=blue,
        thick,
        mark=o
    ]
    table[
        col sep=comma,
        x=delta,
        y expr=1-\thisrow{mean coverage}
    ] {dllo_coverage_a1.csv};
    \addlegendentry{DLLO}
    \addplot[
        color=red,
        thick,
        mark=x
    ]
    table[
        col sep=comma,
        x=delta,
        y expr=1-\thisrow{mean coverage}
    ] {ours_coverage_a1.csv};
    \addlegendentry{Our Approach}
    \addplot[
        color=green!50!black,
        thick,
        mark=triangle*
    ]
    table[
        col sep=comma,
        x=delta,
        y expr=1-\thisrow{mean coverage}
    ] {student_coverage_a1.csv};
    \addlegendentry{Student-$t$}
    \addplot[
        black,
        dashed,
        thick,
        domain=0.01:0.19,
        samples=2
    ] {x};
    \addlegendentry{$y=\delta$}
    \end{axis}
\end{tikzpicture}
        }
        \caption{Miscoverage: $\alpha=1$.}
        \label{fig::mean_coverage_alpha_1}
    \end{subfigure}

    \vspace{1em}

    \begin{subfigure}[b]{0.43\textwidth}
        \centering
        \resizebox{\textwidth}{!}{%
            \begin{tikzpicture}
    \begin{axis}[
        width=11cm, height=10cm,
        xlabel={$\delta$},
        ylabel={$\mathbf{E}|\hat{I}|$},
        xtick={0.01,0.04,...,0.19},
        grid=major,
        grid style={gray!30},
        axis lines=left,
        legend style={at={(0.5,-0.15)}, anchor=north, legend columns=3}
    ]
    \addplot[
        color=blue,
        thick,
        mark=o
    ]
    table[
        col sep=comma,
        x=delta,
        y=mean length
    ] {dllo_length_a025.csv};
    \addlegendentry{DLLO}
    \addplot[
        color=red,
        thick,
        mark=x
    ]
    table[
        col sep=comma,
        x=delta,
        y=mean length
    ] {ours_length_a025.csv};
    \addlegendentry{Our Approach}
    \addplot[
        color=green!50!black,
        thick,
        mark=triangle*
    ]
    table[
        col sep=comma,
        x=delta,
        y=mean length
    ] {student_length_a025.csv};
    \addlegendentry{Student-$t$}
    \end{axis}
\end{tikzpicture}
        }
        \caption{Mean length: $\alpha=0.25$.}
        \label{fig:mean-length-alpha-025}
    \end{subfigure}
    \hfill
    \begin{subfigure}[b]{0.43\textwidth}
        \centering
        \resizebox{\textwidth}{!}{%
            \begin{tikzpicture}
    \begin{axis}[
        width=11cm, height=10cm,
        xlabel={$\delta$},
        ylabel={$\mathbf{P}\left(\mu \notin \hat{I} \right)$},
        xtick={0.01,0.04,...,0.19},
        grid=major,
        grid style={gray!30},
        axis lines=left,
        legend style={at={(0.5,-0.15)}, anchor=north, legend columns=3}
    ]
    \addplot[
        color=blue,
        thick,
        mark=o
    ]
    table[
        col sep=comma,
        x=delta,
        y expr=1-\thisrow{mean coverage}
    ] {dllo_coverage_a025.csv};
    \addlegendentry{DLLO}
    \addplot[
        color=red,
        thick,
        mark=x
    ]
    table[
        col sep=comma,
        x=delta,
        y expr=1-\thisrow{mean coverage}
    ] {ours_coverage_a025.csv};
    \addlegendentry{Our Approach}
    \addplot[
        color=green!50!black,
        thick,
        mark=triangle*
    ]
    table[
        col sep=comma,
        x=delta,
        y expr=1-\thisrow{mean coverage}
    ] {student_coverage_a025.csv};
    \addlegendentry{Student-$t$}
    \addplot[
        black,
        dashed,
        thick,
        domain=0.01:0.19,
        samples=2
    ] {x};
    \addlegendentry{$y=\delta$}
    \end{axis}
\end{tikzpicture}
        }
        \caption{Miscoverage: $\alpha=0.25$.}
        \label{fig:miscoverage-alpha-025}
    \end{subfigure}

    \vspace{1em}

    \begin{subfigure}[b]{0.43\textwidth}
        \centering
        \resizebox{\textwidth}{!}{%
            \begin{tikzpicture}
    \begin{axis}[
        width=11cm, height=10cm,
        xlabel={$\delta$},
        ylabel={$\mathbf{E}|\hat{I}|$},
        xtick={0.01,0.04,...,0.19},
        grid=major,
        grid style={gray!30},
        axis lines=left,
        legend style={at={(0.5,-0.15)}, anchor=north, legend columns=3}
    ]
    \addplot[
        color=blue,
        thick,
        mark=o
    ]
    table[
        col sep=comma,
        x=delta,
        y=mean length
    ] {dllo_length_a01.csv};
    \addlegendentry{DLLO}
    \addplot[
        color=red,
        thick,
        mark=x
    ]
    table[
        col sep=comma,
        x=delta,
        y=mean length
    ] {ours_length_a01.csv};
    \addlegendentry{Our Approach}
    \addplot[
        color=green!50!black,
        thick,
        mark=triangle*
    ]
    table[
        col sep=comma,
        x=delta,
        y=mean length
    ] {student_length_a01.csv};
    \addlegendentry{Student-$t$}
    \end{axis}
\end{tikzpicture}
        }
        \caption{Mean length: $\alpha=0.1$.}
        \label{fig:mean-length-alpha-01}
    \end{subfigure}
    \hfill
    \begin{subfigure}[b]{0.43\textwidth}
        \centering
        \resizebox{\textwidth}{!}{%
            \begin{tikzpicture}
    \begin{axis}[
        width=11cm, height=10cm,
        xlabel={$\delta$},
        ylabel={$\mathbf{P}\left(\mu \notin \hat{I} \right)$},
        xtick={0.01,0.04,...,0.19},
        grid=major,
        grid style={gray!30},
        axis lines=left,
        legend style={at={(0.5,-0.15)}, anchor=north, legend columns=3}
    ]
    \addplot[
        color=blue,
        thick,
        mark=o
    ]
    table[
        col sep=comma,
        x=delta,
        y expr=1-\thisrow{mean coverage}
    ] {dllo_coverage_a01.csv};
    \addlegendentry{DLLO}
    \addplot[
        color=red,
        thick,
        mark=x
    ]
    table[
        col sep=comma,
        x=delta,
        y expr=1-\thisrow{mean coverage}
    ] {ours_coverage_a01.csv};
    \addlegendentry{Our Approach}
    \addplot[
        color=green!50!black,
        thick,
        mark=triangle*
    ]
    table[
        col sep=comma,
        x=delta,
        y expr=1-\thisrow{mean coverage}
    ] {student_coverage_a01.csv};
    \addlegendentry{Student-$t$}
    \addplot[
        black,
        dashed,
        thick,
        domain=0.01:0.19,
        samples=2
    ] {x};
    \addlegendentry{$y=\delta$}
    \end{axis}
\end{tikzpicture}
        }
        \caption{Miscoverage: $\alpha=0.1$.}
        \label{fig:miscoverage-alpha-01}
    \end{subfigure}

    \caption{
        Comparison of mean interval length and empirical miscoverage for DLLO,
        our proposed adaptive approach, and the Student-$t$ interval under the
        considered mixture distributions. The dashed line in the right panels is the target
        miscoverage level $y=\delta$.
    }
    \label{fig:camera-ready-mixture}
\end{figure}

In this section, we provide a numerical illustration comparing our adaptive approach to the construction proposed by \citet{devroye2016sub} that we shall denote ``DLLO''. We focus on a scenario where the ``approximate symmetry'' condition \eqref{assumption:1} required by their method fails, as formally established in Lemma \ref{lem:counter}. To this end, we utilize the mixture distribution constructed in the proof of that lemma for different values of $\alpha$, which serves as a stress test for uncertainty quantification under heavy tails and asymmetry. For a given confidence level $1-\delta$, both procedures are calibrated by setting the number of blocks $K$ of order $\log(1/\delta)$ to ensure the nominal coverage is met in the well-behaved regime. In the experiments that follow, we have fixed $\alpha_n = 0.25$ for the construction of our adaptive procedure, representing a robust middle-ground for distributions that deviate from symmetry but retain sufficient ``effective'' mass for variance estimation.

\textbf{Calibration and baselines.}
All practical constants are calibrated once in the well-behaved regime
$\alpha=1$ and are subsequently kept fixed across all experiments.
Specifically, we use $K=\lceil 7\log(3/\delta)\rceil$ blocks and the
numerical constant $C=3.7$. The lower bound $\alpha_n$ supplied to our
procedure is also fixed once and for all at $\alpha_n=0.25$.
We then vary only the data-generating distribution, considering the
imbalanced mixtures from Lemma \ref{lem:counter} with
$\alpha\in\{1,0.25,0.1\}$.
We compare our method with the DLLO procedure of Devroye et al. and
with the Student-$t$ interval as an asymptotic benchmark.

The right panels report empirical miscoverage probabilities; hence the dashed line \(y=\delta\) represents the nominal boundary. In the well-behaved regime \(\alpha=1\), the robust procedures achieve nominal coverage. As \(\alpha\) decreases, DLLO substantially exceeds the nominal miscoverage level and the Student-\(t\) interval also exhibits undercoverage, whereas our procedure remains below the nominal boundary. The same calibration constants are used throughout, so the improved coverage in the difficult regimes is not the result of regime-specific tuning.

Figures \ref{fig::mean_length_alpha_1} and \ref{fig::mean_coverage_alpha_1} illustrate the regime where $\alpha \approx 1$. In this case, both the mean interval length and the empirical coverage of our method are comparable to those achieved by the approach of DLLO. This suggests that our refinement does not incur a significant ``robustness price'' when the distribution is well-behaved. However, as we move away from the easy regime, the performance gap becomes evident.
\begin{itemize}
\item \textbf{Moderate $\alpha$:} for $\alpha = 0.25$ (Figures \ref{fig:mean-length-alpha-025} and \ref{fig:miscoverage-alpha-025}), the method in \cite{devroye2016sub} begins to exhibit a degradation in reliability.
\item \textbf{Small $\alpha$:} for $\alpha = 0.1$ (Figures \ref{fig:mean-length-alpha-01} and \ref{fig:miscoverage-alpha-01}), the coverage of the standard approach drops significantly below the nominal level.
\end{itemize}
While our  intervals are naturally wider to account for the increased difficulty of the problem, they consistently maintain the prescribed $1-\delta$ coverage guarantee. This demonstrates that our method maintains nominal empirical coverage across the considered regimes, whereas non-adaptive methods fail to remain valid when the distribution's concentration properties are poor.
\begin{remark}
    Numerical experiments were performed using a standard laptop computer and did not require significant computational resources.
\end{remark}

 \subsection*{Final remarks and future directions} 

 We showed that the difficulty of the problem of constructing sub-Gaussian non-asymptotic confidence intervals for the mean is driven by the ``effective sample size''. This parameter offers natural interpolation between the ``easy'' and ``hard'' cases of the problem. Moreover, we established that it is not possible to learn this effective sample size from the data alone, and that a known lower bound is required for constructing valid confidence intervals. 
 % A careful inspection of the proofs reveals that our results still hold if instead of the finite variance one assumes existence of finite central moments of order $1+\delta$ for $\delta\in (0,1)$. The most significant change in this case is the fact that the length of confidence intervals will scale as $(n\alpha_n)^{-\frac{\delta}{1+\delta}}$; we limit the presentation to the case when $\delta = 1$ in order to streamline the presentation of main ideas.
 It would be interesting to extend our results to the case of distributions and confidence sets in $\mb R^d$, and find the correct extension of the notion of ``effective sample size'' to this more general framework.

\begin{ack}
The work of M.N. was supported by France $2030$. The work of S.M. was supported by the National Science Foundation grant DMS CAREER-2045068. The authors declare no competing interests.
\end{ack}
\bibliography{ref.bib}

\appendix

%###########################
\section{Proofs of the main results}
\label{section:proofs_upper}
%###########################

%#############################################

Everywhere below, $\mathbb{V}(X)$ denotes the variance of a random variable $X$.

\subsection{Proof of Proposition \ref{prop:infty}}

To show that the minimax risk $\mathcal{R}(n, \rho_n, \sigma_*) = 1$, it suffices to construct two distributions, $P_0$ and $P_1$, that are statistically indistinguishable based on a sample of size $n$, yet have different means. To this end, let $\tau \in (0, 1)$ be a small constant to be chosen later. Without loss of generality, we set $\sigma_\ast$ to be $1$ (the general case can be deduced via rescaling). Consider first the distribution $P_0$ (under $\mathbf{H}_0$) defined by
\[
\begin{cases}
    P_0(X = -A) = 1-\tau, \\
    P_0(X = A/\tau-A) = \tau.
\end{cases}
\]
For $X \sim P_0$, the mean is $\mathbb{E}[X] = 0$. The variance is given by $\sigma_0^2 = A^2(1-\tau)/\tau$, which is finite for any $A > 0$. Therefore, $P_0 \in \m P^{(2)}$. Now, consider the distribution $P_1$  defined via
\[
\begin{cases}
    P_1(X = -A) = 1- \tau, \\
    P_1(X = 1/\sqrt{\tau} - A) = \tau/2, \\
    P_1(X = -1/\sqrt{\tau} - A) = \tau/2.
\end{cases}
\]
It is easy to see that $\mathbb{E}[X] = -A$ and $\mathbb{V}(X) = 1$. 
Next, note that 
\[
\mb P_{X_1\sim P_0}\l(X_1 = \dots = X_n = -A\r) = \mb P_{X_1\sim P_1}\l(X_1 = \dots = X_n = -A\r) = (1-\tau)^n.
\]
Therefore, for any test $\hat T(X_1,\ldots,X_n)$, using the Neyman-Pearson lemma, we get
\begin{multline*}
\mb P_{X_1\sim P_0}\l(\hat T = 1 \r) + \mb P_{X_1\sim P_1}\l(\hat T = 0\r)
 \geq \int \min\left( dP_0^{\otimes n}, dP_1^{\otimes n} \right)\\
\geq 
\min\l(\mb P_{X_1\sim P_0}\l(X_1 = \dots = X_n = -A\r),\mb P_{X_1\sim P_1}\l(X_1 = \dots = X_n = -A\r)\r) = (1-\tau)^n.
\end{multline*}
Choosing $\tau$ to be small enough, we can make the right-hand side arbitrarily close to $1$.

Note that this simple construction demonstrates that without further restrictions on the distribution class, the presence of a dominant mass at the mean of a distribution makes adaptive uncertainty quantification impossible.
    \begin{comment}
        
\[
\begin{cases}
    P_0(X = 0) = 1- \tau, \\
    P_0(X = 1/\sqrt{\tau}) = \tau/2, \\
    P_0(X = -1/\sqrt{\tau}) = \tau/2.
\end{cases}
\]
For $X \sim P_0$, we have $\mathbb{E}[X] = 0$ and $\mathbb{V}(X) = 1$, thus $P_0 \in \m P^{(2)}$ with $\sigma = 1$. Now, consider the distribution $P_1$ (under $\mathbf{H}_1$) defined by
\[
\begin{cases}
    P_1(X = 0) = 1- \tau, \\
    P_1(X = A/\tau) = \tau.
\end{cases}
\]
For $X \sim P_1$, the mean is $\mathbb{E}[X] = A$. The variance is given by $\sigma_1^2 = A^2(1-\tau)/\tau$, which is finite for any $A > 0$. Therefore, $P_1 \in \m P^{(2)}$. The total variation distance between the product measures $P_0^{\otimes n}$ and $P_1^{\otimes n}$ is bounded by the probability of observing at least one non-zero value. Since the mass at the origin for both distributions is $1-\tau$, it follows that
\[
\int P_0^{\otimes n} \wedge P_1^{\otimes n} \geq \mathbf{P}_{P_0}(X_1 = \dots = X_n = 0) = (1-\tau)^n.
\]
By choosing $\tau \leq \frac{1}{n} \log(1/\delta)$, the right-hand side can be made arbitrarily close to $1$. In particular, for any $A > 0$, the distributions are indistinguishable as $n$ remains fixed and $\tau \to 0$. Since $\rho_n$ can be arbitrarily large and the Type I error is uniform over $\sigma$, the minimax risk $\mathcal{R}(n, \rho_n, \sigma_*)$ is necessarily $1$. This construction demonstrates that without further restrictions on the distribution class, the presence of a dominant mass at the mean makes adaptive uncertainty quantification impossible.

    \end{comment}

\subsection{Proofs of Proposition \ref{prop:alpha} and Theorem \ref{theorem:main}}
\label{proof:theorem:main}
\textbf{1. Proof of the lower bound.} Assume that $\frac{\log(1/\delta)}{n}\leq \alpha_n \leq \frac14$. To establish the lower bound, we will construct two distributions, $P_0$ and $P_1$, that are statistically indistinguishable under $n$ observations but have sufficiently different means. Let $A = \sqrt{\frac{\log(1/\delta)}{n\alpha_n}}$ and define $P_0$ via
\[
P_0 = \left(1 - \frac{\log(1/\delta)}{n} \right) \mathcal{N}(A, 1) + \frac{\log(1/\delta)}{n} \delta_{z_0}, \quad \text{where } z_0 = -\frac{1 - \frac{\log(1/\delta)}{n}}{\frac{\log(1/\delta)}{n}} A.
\]
Let $X \sim P_0$. By construction, $\mathbb{E}(X) = 0$. A direct calculation of the variance yields that 
\[
\mathbb{V}(X) = \left(1 - \frac{\log(1/\delta)}{n} \right)(A^2 + 1) + \frac{\log(1/\delta)}{n} z_0^2 \simeq \frac{1}{\alpha_n}.
\]
Let $\xi = X / \sqrt{\mathbb{V}(\xi)}$ be the standardized version. To show that $P_0 \in \mathcal{P}_{\alpha_n}$, we verify the effective mass condition. For $n\alpha_n \gg \log(1/\delta)$, we have that
\begin{align*}
\mathbb{E}\left({\xi}^2 \mathbf{1}\left\{{\xi}^2 \leq \frac{n\alpha_n}{\log(1/\delta)}\right\}\right) \geq \alpha_n\left(1 - \frac{\log(1/\delta)}{n} \right) \mathbb{E}\left[Z^2 \mathbf{1}\left\{Z^2 \leq \frac{n}{\log(1/\delta)}\right\}\right] \geq c\alpha_n,
\end{align*}
where $Z \sim \mathcal{N}(A, 1)$ and $c$ is a sufficiently small universal constant. Thus, $P_0 \in \mathcal{P}_{\alpha_n}$.

\noindent Next, consider the alternative distribution 
\[
P_1 = \left(1 - \frac{\log(1/\delta)}{n} \right) \mathcal{N}(A, 1) + \frac{\log(1/\delta)}{n} \delta_0.
\]
For $X \sim P_1$, the mean satisfies $\mathbb{E}(X) = c A = c \sqrt{\frac{\log(1/\delta)}{n\alpha_n}}$ for some $c>0$, and since $P_1$ has finite variance, $P_1 \in \m P^{(2)}$. We standardize $X$ by setting $\zeta = (X-\mathbb{E}(X))/\sqrt{\mathbb{V}(X)}$, and it is straightforward to verify that $\zeta$ satisfies the effective mass condition $\alpha_{n,\delta}(\zeta) \geq c_2$ for some constant $c_2>0$ that does not depend on $n$ and $\delta$. Since $\alpha_{n,\delta}(\zeta) \geq c_2$ then  $P_1 \in \mathcal{P}_{\gamma_n}$ for any $\gamma_n \leq c_2$. Using the Neyman-Pearson lemma, the minimax risk is lower-bounded by the total variation distance between the product measures :
\begin{multline*}
\mathcal{R}(n, \rho_n, \sigma_*, \alpha_n, \delta) \geq \int \min\left( dP_0^{\otimes n}, dP_1^{\otimes n} \right) \
\geq \mathbb{P}_{P_1}\left( \forall i \in [n], X_i \neq 0 \right) 
\\
= \left(1 - \frac{\log(1/\delta)}{n} \right)^n \geq e^{-\log(1/\delta)} = \delta.
\end{multline*}
This confirms that no test can distinguish $P_0$ and $P_1$ with risk better than $\delta$, establishing that the separation condition $\rho_n =\Omega\left( \sqrt{\frac{\log(1/\delta)}{n\alpha_n}}\right)$ is necessary.

\noindent \textbf{2. Proof of the upper bound.} Consider the test $\hat{T} := \mathbf{1}\left\{ \left| \frac{\hat{\mu}}{\hat{\sigma}} \right| \geq 5\sqrt{\frac{\log(1/\delta)}{n \alpha_n}} \right\}$. We recall that $|\hat{\mu} - \mu| \leq 10 \sigma \sqrt{\frac{\log(1/\delta)}{n}}$ with probability at least $1-\delta/2$, and that $\hat{\sigma}$ satisfies the bound of Proposition \ref{prop:sigma_estimator}.

\noindent\textbf{Type \Romannum{1} Error:} Under $\mathbf{H}_0$ ($\mu=0$), using the union bound and the concentration properties of $\hat{\sigma}$, we deduce that 
\begin{align*}
\mathbb{P}_0(\hat{T}=1) &\leq \mathbb{P}_0\left( \left|\frac{\hat{\mu}}{\sigma} \right| \geq 10\sqrt{\frac{\log(1/\delta)}{n}}\right) + \mathbb{P}_0 \left( \frac{\hat{\sigma}^2}{\sigma^2} \leq \frac{\alpha_n}{4} \right) \leq \frac{\delta}{2} + \frac{\delta}{2} = \delta.
\end{align*}

\noindent\textbf{Type II Error:} Under $\mathbf{H}_1$, assume that $|\mu| \geq 30\sigma \sqrt{\frac{\log(1/\delta)}{n\alpha_n}}$. By triangle inequality,
\begin{align*}
\mathbb{P}_1(\hat{T}=0) &= \mathbb{P}_1\left( \left|\hat{\mu}\right| < 5 \hat{\sigma} \sqrt{\frac{\log(1/\delta)}{n\alpha_n}} \right) \\
&\leq \mathbb{P}_1\left( |\mu| - |\hat{\mu} - \mu| < 5 \hat{\sigma} \sqrt{\frac{\log(1/\delta)}{n\alpha_n}} \right) \\
&\leq \mathbb{P}_1\left( |\hat{\mu} - \mu| > 30\sigma\sqrt{\frac{\log(1/\delta)}{n\alpha_n}} - 5 \hat{\sigma} \sqrt{\frac{\log(1/\delta)}{n\alpha_n}} \right).
\end{align*}
On the event $\l\{\hat{\sigma}^2 \leq 10\sigma^2\r\}$, we have that $5\hat{\sigma} \leq 5\sqrt{10}\sigma \leq 16 \sigma$. Therefore,
\begin{align*}
\mathbb{P}_1(\hat{T}=0) &\leq \mathbb{P}\left( |\hat{\mu} - \mu| > 14\sigma \sqrt{\frac{\log(1/\delta)}{n\alpha_n}} \right) + \mathbb{P}\left( \hat{\sigma}^2 > 10\sigma^2 \right) \leq \delta.
\end{align*}
The result follows. Observe that as $\alpha_n \to 0$, we recover the impossibility result of Proposition \ref{prop:infty}.

%A similar bound holds for Type \Romannum{2} error or for the length of the confidence interval for inference.

%###########################
\subsection{Proof of Proposition \ref{prop:sigma_estimator}}
\label{proof:sigma_estimator}
%###########################

Let $\mathcal{A}$ be the event $\{ | \hat{\mu}- \mu | \leq 10\sigma \sqrt{\frac{\log(1/\delta)}{n}}\}$, which holds with probability at least $1-\delta$ by assumption. On $\mathcal{A}$, we have that 
\[
(\hat{\mu}-\mu)^2 \leq 100 \sigma^2 \frac{\log(1/\delta)}{n}.
\]
We aim to bound the probability that the median-of-means estimator $\hat{\sigma}_{\mathrm{MoM}}^2$ significantly deviates from the true variance $\sigma^2$. By the definition of the median, this occurs only if more than half of the block estimators $\hat{\sigma}_k^2$ turn out to be ``bad". Specifically, we wish to show that
\[
\mathbb{P}\left( \frac{\alpha_n}{4} \leq \frac{\hat{\sigma}_{\mathrm{MoM}}^2}{\sigma^2} \leq 10 \right) \geq 1 - 2\delta.
\]
Recall that for any block $k \in [K]$, the estimator is $\hat{\sigma}_k^2 = \frac{1}{m} \sum_{j \in A_k} (X_j - \hat{\mu})^2$. Using the identity $(X_j - \hat{\mu})^2 \leq 2(X_j - \mu)^2 + 2(\hat{\mu} - \mu)^2$ (and similar lower bounds), we can relate the block estimators to the oracle centered sums $S_k := \frac{1}{m} \sum_{j \in A_k} \xi_j^2$, where $\xi_j = (X_j - \mu)/\sigma$. 
On event $\mathcal{A}$, the condition $\frac{\alpha_n}{4} \sigma^2 \leq \hat{\sigma}_k^2 \leq 10 \sigma^2$ is satisfied if
\[
\frac{\alpha_n}{2} + 200 \frac{\log(1/\delta)}{n} \leq S_k \leq 5 - 100 \frac{\log(1/\delta)}{n}.
\]
Under the assumption $n\alpha_n \geq C \log(1/\delta)$, we can simplify these thresholds to $S_k \in [\frac{2}{3}\alpha_n, 4]$. By the union bound, the probability that the median fails is bounded from above by
\begin{equation*}
\mathbb{P}(\mathcal{A}^c) + \mathbb{P}\left( \sum_{k=1}^K \mathbf{1}\{{S_k > 4}\} \geq \frac{K}{4} \right) + \mathbb{P}\left( \sum_{k=1}^K \mathbf{1}\{{S_k < \frac{2}{3}\alpha_n}\} \geq \frac{K}{4} \right).
\end{equation*}

\noindent\textbf{1. Upper bound (concentration of the mean).} Using Markov's inequality and the fact that $\mathbb{E}[S_k] = 1$, we see that $\mathbb{P}(S_k > 4) \leq 1/4$. However, to deduce a tighter bound, we assume $n$ is large enough such that $\mathbb{P}(S_k > 4) \leq 1/8$. Applying Hoeffding's inequality to the sum of indicators, we obtain that
\[
\mathbb{P}\left( \sum_{k=1}^K \mathbf{1}_{\{S_k > 4\}} \geq \frac{K}{4} \right) \leq \exp\left(- \frac{K}{150} \right) \leq \delta,
\]
provided that $K \geq 150 \log(1/\delta)$.

\noindent\textbf{2. Lower bound.} To control the lower tail, we consider the truncated variables $\bar{\xi}_j^2 = \xi_j^2 \mathbf{1}_{\{\xi_j^2 \leq n\alpha_n / \log(1/\delta)\}}$. By the definition of $\alpha_n$, we have $\mathbb{E}[\bar{\xi}_j^2] = \alpha_n$. We need to show that $\mathbb{P}(S_k < \frac{2}{3}\alpha_n) \leq 1/8$. Note that $\mathbb{V}(\bar{\xi}_j^2) \leq \mathbb{E}[\bar{\xi}_j^4] \leq \frac{n\alpha_n}{\log(1/\delta)} \mathbb{E}[\bar{\xi}_j^2]$. Applying Bernstein's inequality to the sum of $m$ i.i.d. truncated variables, we see that
\begin{align*}
\mathbb{P}\left( \frac{1}{m} \sum_{j=1}^m \bar{\xi}j^2 < \frac{2}{3}\alpha_n \right) &\leq \mathbb{P}\left( \frac{1}{m} \sum_{j=1}^m \bar{\xi}_j^2 - \mathbb{E}[\bar{\xi}_j^2] < - \frac{\alpha_n}{3} \right) \\
&\leq \exp \left( - \frac{m \alpha_n^2 / 18}{\mathbb{V}(\bar{\xi}_j^2) + \frac{n\alpha_n}{3 \log(1/\delta)} \frac{\alpha_n}{3}} \right) \\
&\leq \exp \left( - \frac{m \alpha_n^2}{C' \frac{n \alpha_n^2}{\log(1/\delta)}} \right) = \exp \left( - \frac{m \log(1/\delta)}{C' n} \right).
\end{align*}
Since $m = n/K$, this probability is bounded by a constant (e.g., $1/8$) as long as $K$ is of order $\log(1/\delta)$. Finally, applying Hoeffding's inequality again for the $K$ blocks ensures that the number of blocks failing the lower threshold is less than $K/4$ with probability $1-\delta$. Combining these results yields the claim.

\subsection{Proof of Proposition \ref{prop:sigma_lower}}
\label{proof:sigma_lower}
%###########################

Assume that $\frac{\log(1/\delta)}{n}\leq \alpha_n \leq \frac14$. We will establish the lower bound by constructing two triples $(\mu_0, \sigma_0, P_0)$ and $(\mu_1, \sigma_1, P_1)$ such that the resulting data distributions are nearly indistinguishable, yet their variances differ by a factor of $\alpha$.

\noindent \textbf{1. Construction of $P_0$.} Let $A := \sqrt{\frac{\log(1/\delta)}{n}}$ and define $P_0$ as the distribution of $\xi$ via the mixture $(1 - \tau) \mathcal{N}(A, \alpha_n) + \tau \delta_{z_0}$, where $\tau = \frac{\log(1/\delta)}{n}$ and $z_0$ is chosen such that $\mathbb{E}(\xi) = 0$. Simple algebra shows that $z_0 = -\frac{1 - \frac{\log(1/\delta)}{n}}{\frac{\log(1/\delta)}{n}}A$. A direct calculation yields that $\mathbb{V}(\xi) = (1-\tau)(1-\tau+A^2+\alpha_n)$. Hence, $\mathbb{V}(\xi)$ is of order $1$. To verify the effective mass condition, let $Z \sim \mathcal{N}(A, \alpha_n)$. Then
\[
\mathbb{E}[\xi^2 \mathbf{1}_{\{\xi^2 \leq \frac{n\alpha_n}{\log(1/\delta)}\}}] \geq (1-\tau) \mathbb{E}[Z^2 \mathbf{1}_{\{Z^2 \leq \frac{n\alpha_n}{\log(1/\delta)}\}}] \geq \alpha_n - C\frac{\log(1/\delta)}{n} \geq \frac{\alpha_n}{2},
\]
where we used the fact that $A^2 = \log(1/\delta)/n$. Thus, $\xi \in \mathcal{P}_{c_1\alpha_n}$ for some constant $c_1$.

\noindent \textbf{2. Construction of $P_1$.} Let $P_1$ be the mixture $\sim (1-\tau) \mathcal{N}(A, \alpha_n) + \tau \delta_0$. For this distribution, the mean is $\mathbb{E}(X) =(1-\tau) A$ and the variance $\mathbb{V}(X)$ is of order $ \alpha_n$. We standardize $X$ by setting $\zeta = (X-\mathbb{E}(X))/\sqrt{\mathbb{V}(X)}$, and it is straightforward to verify that $\zeta$ satisfies the effective mass condition $\alpha_{n,\delta}(\zeta) \geq c_2$ for some constant $c_2>0$.

\noindent \textbf{3. Indistinguishability argument.} We will now compare the product measures $Q_0 = P_0^{\otimes n}$ (with $\sigma^2=1$) and $Q_1 = P_1^{\otimes n}$ (with $\sigma^2$ of order $ \alpha$). Any estimator $\tilde{\sigma}^2$ must satisfy the coverage requirement for both. Let $c_3 = \min(c_1, c_2)$. The minimax error is bounded below by the testing error between $Q_0$ and $Q_1$, namely 
\begin{multline*}
\sup_{P \in \mathcal{P}_{P\alpha_n}} \mathbb{P}\left( \left(c_3\alpha_n \leq \frac{\tilde{\sigma}^2}{\sigma^2} \leq 1\right)^c \right) 
\\
\geq \frac{1}{2} \left[ \mathbb{P}_{Q_0}\left( \tilde{\sigma}^2 < c_3\alpha_n \text{ or } \tilde{\sigma}^2 > 1 \right) + \mathbb{P}_{Q_1}\left( \frac{\tilde{\sigma}^2}{\alpha_n} < c_3\alpha_n \text{ or } \frac{\tilde{\sigma}^2}{\alpha_n} > 1 \right) \right].
\end{multline*}
Note that the intervals $[c_3\alpha_n, 1]$ and $[c_3\alpha-n^2, \alpha_n]$ are disjoint for sufficiently small $\alpha_n$. Consequently, for any $\tilde{\sigma}^2$, the probability of error is at least the probability that the two distributions cannot be distinguished. Applying the same total variation argument as in Proposition \ref{prop:infty}, we see that
\begin{multline*}
\mathcal{E} \geq \frac{1}{2} \int \min(dQ_0, dQ_1) 
\geq \frac{1}{2} \mathbb{P}{Q_1}(X_1, \dots, X_n \neq 0) 
= \frac{1}{2} \left(1 - \frac{\log(1/\delta)}{n} \right)^n \geq \frac{\delta}{2}.
\end{multline*}
This implies that no estimator can consistently distinguish between a variance of order $1$ and a variance of order $\alpha_n$ across the class $\mathcal{P}_{\alpha_n}$, completing the proof.

\subsection{Extension to finite moment of order $1<p<2$}
\label{app:lower-moments}

Fix $\beta\in(0,1]$ and write $p=1+\beta$. Given a random variable $X$
with mean $\mu$ and $p$-th central moment
\[
    v_p:=\bigl(\E|X-\mu|^p\bigr)^{1/p}\in(0,\infty),
\]
define $\xi=\xi(X):=\frac{X-\mu}{v_p}$ and 
\[
\alpha^{(\beta)}_{n,\delta}(\xi)
:=
\sup\left\{
a\in[0,1]:
\E\left[
|\xi|^p
\mathbf{1}\left\{
|\xi|^p\le \frac{na}{\log(1/\delta)}
\right\}
\right]
\ge a^\beta
\right\}.
\]
For $\alpha\in(0,1]$, let $\mathcal P^{(\beta)}_\alpha(n,\delta)$
denote the class of distributions satisfying
$\alpha^{(\beta)}_{n,\delta}(\xi)\ge \alpha$.
For $\beta=1$, this definition coincides with the effective-mass
parameter used in the main text.

\paragraph{Generalized result.}
Up to constants depending only on $\beta$, the conclusions of
Theorem~ \ref{theorem:main} remain valid with $\sigma$ replaced by $v_p$ and with the
rate
\[
    v_p
    \left(
        \frac{\log(1/\delta)}{n\alpha}
    \right)^{\frac{\beta}{1+\beta}}.
\]
More precisely, if
$n\alpha\ge C_\beta\log(1/\delta)$, there exists a confidence interval
$\widehat C^{(\beta)}_n$ such that
\[
    \inf_{P\in\mathcal P^{(\beta)}_\alpha(n,\delta)}
    \mathbb{P}\!\left(\mu\in\widehat C^{(\beta)}_n\right)
    \ge 1-\delta
\]
and, with probability at least $1-\delta$,
\[
    \left|\widehat C^{(\beta)}_n\right|
    \le
    C_\beta v_p
    \left(
        \frac{\log(1/\delta)}{n\alpha}
    \right)^{\frac{\beta}{1+\beta}}.
\]
Conversely, this rate cannot be improved, up to constants depending
only on $\beta$. The same lower-bound construction also yields the
analogue of the non-adaptivity statement in Theorem \ref{theorem:main}.

\paragraph{Proof: upper bound.}
Set $L=\log(1/\delta)$ and let $K\asymp_\beta L$.
Partition the observations into $K$ blocks of size
$m\asymp n/K$. A standard median-of-means estimator $\widehat\mu$
satisfies
\[
    |\widehat\mu-\mu|
    \le
    C_\beta v_p\left(\frac{L}{n}\right)^{\beta/p}
\]
with probability at least $1-\delta/3$.
Indeed, if $\overline X_B$ denotes the empirical mean on a block,
the standard $L_p$ inequality for sums of independent centered random
variables, valid for $1<p\le2$, gives
\[
    \E|\overline X_B-\mu|^p
    \le
    C_\beta m^{1-p}\E|X-\mu|^p
    =
    C_\beta v_p^p m^{-\beta}.
\]
Markov's inequality gives a fixed-probability bound on each block,
and median aggregation amplifies it to probability $1-\delta/3$.

We next estimate the $p$-th central moment. For every block $B_k$, set
\[
    \widehat v_k^p
    :=
    \frac1m\sum_{j\in B_k}|X_j-\widehat\mu|^p,
    \qquad
    \widehat v_{\rm MoM}^p
    :=
    \operatorname{med}\bigl(\widehat v_1^p,\ldots,\widehat v_K^p\bigr).
\]
Let
\[
    S_k:=\frac1m\sum_{j\in B_k}|\xi_j|^p,
    \qquad
    e:=\frac{\widehat\mu-\mu}{v_p}.
\]
Since $p\in(1,2]$,
\begin{equation}\label{A.1}
    2^{1-p}S_k-|e|^p
    \le
    \frac{\widehat v_k^p}{v_p^p}
    \le
    2^{p-1}\bigl(S_k+|e|^p\bigr).
    \tag{A.1}
\end{equation}
The upper tail of $S_k$ is controlled directly by Markov's inequality,
because $\E|\xi|^p=1$.

For the lower tail, take a witness
$a_\star\ge \alpha/2$ in the definition of
$\alpha^{(\beta)}_{n,\delta}(\xi)$ and define
\[
    Y
    :=
    |\xi|^p
    \mathbf{1}\left\{
        |\xi|^p\le\frac{na_\star}{L}
    \right\}.
\]
Then
\[
    q:=\E Y\ge a_\star^\beta,
    \qquad
    0\le Y\le \frac{na_\star}{L},
\]
and hence
\[
    \mathbb{V}(Y)
    \le
    \E Y^2
    \le
    \frac{na_\star}{L}\,\E Y.
\]
Bernstein's inequality therefore gives
\[
\begin{aligned}
    \mathbb{P}\left(
        \frac1m\sum_{j\in B_k}Y_j < \frac q2
    \right)
    &\le
    \exp\left(
        -c_\beta
        \frac{m q}{na_\star/L}
    \right)                                                     \\
    &\le
    \exp\left(
        -c_\beta \frac{mL}{n}a_\star^{\beta-1}
    \right)
    \le
    \exp\left(
        -c_\beta \frac{mL}{n}
    \right),
\end{aligned}
\tag{A.2}
\]
where the last inequality uses $\beta\le1$ and $a_\star\le1$.
Choosing the constants in $K\asymp_\beta L$ so that the blockwise
success probability is bounded away from $1/2$, and applying the
usual median amplification argument, yields
\begin{equation}\label{A.3}
        c_\beta\alpha^\beta
    \le
    \frac{\widehat v_{\rm MoM}^p}{v_p^p}
    \le
    C_\beta
    \tag{A.3}
\end{equation}
with probability at least $1-\delta/3$.
To use \eqref{A.1}, note that on the event controlling
$\widehat\mu$,
\[
    |e|^p
    \le
    C_\beta\left(\frac{L}{n}\right)^\beta
    \le
    c_\beta\alpha^\beta
\]
whenever $n\alpha\ge C_\beta L$.

Combining the mean and scale bounds, define
\[
    \widehat C^{(\beta)}_n
    :=
    \left[
    \widehat\mu
    \pm
    C_\beta\widehat v_{\rm MoM}
    \left(
        \frac{L}{n\alpha}
    \right)^{\beta/p}
    \right].
\]
The lower bound in \eqref{A.3} implies
$v_p\le C_\beta\alpha^{-\beta/p}\widehat v_{\rm MoM}$,
so the preceding deviation inequality for $\widehat\mu$ gives
coverage at least $1-\delta$. The upper bound in \eqref{A.3} gives
the stated bound on the interval length.

\paragraph{Proof: matching lower bound.}
We adapt the mixture construction used in the proof of
Theorem \ref{theorem:main}. Let
\[
    \tau:=\frac{L}{2n},
    \qquad
    A:=\left(\frac{\tau}{\alpha}\right)^{\beta/p},
\]
and let $\varepsilon$ be a Rademacher random variable. Consider the
common dominant component
\[
    Q:=\mathcal L(A+\varepsilon),
\]
where $\mathcal{L}$ denotes the law of a random variable. This construction is analogous to the common Gaussian component used in the proof of Theorem~\ref{theorem:main}.
Define
\[
    P_0=(1-\tau)Q+\tau\delta_{z_0},
    \qquad
    z_0:=-\frac{1-\tau}{\tau}A,
\]
and
\[
    P_1=(1-\tau)Q+\tau\delta_0.
\]
Then $\E_{P_0}X=0$, whereas
\[
    \mu_1:=\E_{P_1}X=(1-\tau)A\asymp A.
\]
Let $v_{0,p}$ and $v_{1,p}$ denote the corresponding $p$-th central
scales. Since $\tau\le\alpha$ under
$n\alpha\ge C_\beta L$, we have $A\le1$, and a direct calculation
gives
\begin{equation}\label{A.4}
    v_{0,p}^p\asymp_\beta
    \tau |z_0|^p
    \asymp_\beta
    A^p\tau^{1-p}
    =
    \alpha^{-\beta},
    \qquad
    v_{1,p}^p\asymp_\beta 1.
    \tag{A.4}
\end{equation}
After standardization, the dominant component of $P_0$ contributes
a $p$-th moment of order $\alpha^\beta$. Taking
$a_0=c_\beta\alpha$ in the definition of
$\alpha^{(\beta)}_{n,\delta}$, and choosing $c_\beta>0$ small enough,
the truncation threshold $na_0/L$ contains the whole dominant
component, while
\[
    \E_{P_0}\left[
        |\xi_0|^p
        \mathbf{1}\left\{
            |\xi_0|^p\le \frac{na_0}{L}
        \right\}
    \right]
    \ge
    c_\beta\alpha^\beta
    \ge
    a_0^\beta.
\]
Hence
$\alpha^{(\beta)}_{n,\delta}(\xi_0)\ge c_\beta\alpha$.
For $P_1$, the scale is of constant order and the centered dominant
component is bounded; the same argument with a fixed sufficiently
small $a_1>0$ gives
\[
    \alpha^{(\beta)}_{n,\delta}(\xi_1)\ge a_1,
\]
uniformly in $n,\delta$, and $\alpha$. Thus the null belongs to the
hard class of order $\alpha$, whereas the alternative belongs to an
easier class bounded away from zero.

Finally, $P_0$ and $P_1$ share the sub-probability measure
$(1-\tau)Q$. Therefore,
\[
    \int \min\{dP_0^{\otimes n},dP_1^{\otimes n}\}
    \ge
    (1-\tau)^n.
\]
Since $\tau=L/(2n)$ and $L\le n$,
$\log(1-x)\ge-2x$ for $0\le x\le1/2$ yields
\[
    (1-\tau)^n\ge e^{-L}=\delta.
\]
By the Neyman--Pearson lemma, every test between the two experiments
has total error at least $\delta$. On the other hand, by
\eqref{A.4},
\[
    \frac{|\mu_1-\mu_0|}{v_{1,p}}
    \asymp_\beta
    A
    =
    \left(
        \frac{L}{n\alpha}
    \right)^{\beta/(1+\beta)}.
\]
Rescaling the construction gives any prescribed value of $v_{1,p}$.
Consequently, no uniformly valid confidence interval can have
length of smaller order than
\[
    v_p
    \left(
        \frac{\log(1/\delta)}{n\alpha}
    \right)^{\beta/(1+\beta)}.
\]
Since the alternative has effective mass bounded away from zero, the
same pair also gives the analogue of the non-adaptivity conclusion of
Theorem \ref{theorem:main}. This completes the proof.

%###########################
\section{Auxiliary results}
\label{section:auxiliary}
%###########################

It is well known that when $X$ is Gaussian we can achieve the optimal deviations bounds for the confidence interval around the empirical mean $\Bar{X}_n$ such that $\forall \delta>0$, we have
\[
\mathbf{P}\left( |\Bar{X}_n - \mathbf{E}(X)| \geq s_n \sqrt{\frac{2(1+o(1))\log(1/\delta)}{n}}\right) = \delta,
\]
where $s_n$ denotes the empirical standard deviation. 
\begin{lemma}
\label{lemma:gaussian}
    Let $X_1,\ldots,X_n$ be i.i.d. with normal distribution $\mathcal{N}(\mu,\sigma^2)$. Then for all $e^{-n/2} \leq \delta\leq e^{-1}$,
    \[
    \mathbb{P}\left( |\Bar{X}_n - \mathbf{E}(X)| \geq s_n \sqrt{\frac{6\log(2/\delta)}{n}}\right) = \delta
    \]
Moreover, the bound holds with $\sqrt{6\log(2/\delta)}$ replaced by $\sqrt{2(1+o(1)\log(1/\delta)}$ where $o(1)\to 0$ as $n\to\infty$ for each $0<\delta\leq 1$.
\end{lemma}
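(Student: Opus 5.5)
We will establish the bound in the form $\mathbb{P}(\cdot)\le\delta$; this is what is used in \eqref{eq:t-distribution}, and the displayed "$=$" should be read as "$\le$". The approach is to use the exact finite-sample law. By Cochran's theorem, $\bar X_n$ and $s_n^2$ are independent, $\sqrt n(\bar X_n-\mu)/\sigma\sim\mathcal N(0,1)$ and $(n-1)s_n^2/\sigma^2\sim\chi^2_{n-1}$. Hence $T:=\sqrt n(\bar X_n-\mu)/s_n$ has Student's $t_{n-1}$ distribution, which does not depend on $\mu$ or $\sigma$. The event in question is exactly $\{|T|\ge\sqrt{6L}\}$, where $L:=\log(2/\delta)$. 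The whole problem therefore reduces to a tail bound for $t_{n-1}$ at the level $t=\sqrt{6L}$.

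\textbf{Step 1 (tail of $t_{n-1}$).} Write $\nu=n-1$. We will use the standard bound
\[
\mathbb{P}(|T|\ge t)\le C_\nu\Bigl(1+\tfrac{t^2}{\nu}\Bigr)^{-(\nu-1)/2}.
\]
This follows by integrating the density $c_\nu(1+s^2/\nu)^{-(\nu+1)/2}$ and using $\int_t^\infty s\,(1+s^2/\nu)^{-(\nu+1)/2}\,ds=\tfrac{\nu}{\nu-1}(1+t^2/\nu)^{-(\nu-1)/2}$ for $t\ge1$. If this produces awkward constants, we will instead condition on $s_n$. This gives $\mathbb{P}(|T|\ge t)=\mathbb{E}\bigl[2\bar\Phi(t\,s_n/\sigma)\bigr]\le\mathbb{E}\exp\bigl(-t^2 W/(2\nu)\bigr)$ with $W\sim\chi^2_\nu$, and the chi-square Laplace transform then yields the clean bound
\[
\mathbb{P}(|T|\ge t)\le 2\Bigl(1+\tfrac{t^2}{\nu}\Bigr)^{-\nu/2}.
\]
We plan to use this second form.

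\textbf{Step 2 (plug in $t^2=6L$).} The exponent is $\frac{\nu}{2}\log(1+x)$ with $x=6L/\nu$. The hypothesis $\delta\ge e^{-n/2}$ gives $L\le n/2+\log 2$, so $x\le x_0$ for an absolute $x_0$ (about $3$--$4$ once $n\ge2$). By concavity, $\log(1+x)\ge x\,\log(1+x_0)/x_0$ on $[0,x_0]$. This gives
\[
\tfrac{\nu}{2}\log(1+x)\ge 3L\,\frac{\log(1+x_0)}{x_0}\ge L,
\]
since $3\log(1+x_0)/x_0\ge1$ for $x_0\le 4$ or so. Then $2(1+x)^{-\nu/2}\le 2e^{-L}=\delta$. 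The hypothesis $\delta\le e^{-1}$ is used only to ensure $L\ge1$ when absorbing small-$n$ edge cases.

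\textbf{Step 3 (asymptotic refinement).} For fixed $\delta$ we have $T\xrightarrow{d}\mathcal N(0,1)$, because $s_n\xrightarrow{a.s.}\sigma$ by Slutsky. Hence the $1-\delta/2$ quantile $t_{\nu,\delta}$ of $t_\nu$ converges to $z_{\delta/2}$. Combined with $z_{\delta/2}\le\sqrt{2\log(1/\delta)}\,(1+o(1))$, this gives the threshold $\sqrt{2(1+o(1))\log(1/\delta)}$. Here the $o(1)$ absorbs both the quantile convergence and the Gaussian Mills-ratio correction.

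\textbf{Main obstacle.} The main obstacle is Step 2 at the extreme end $\delta\approx e^{-n/2}$, where $L$ is comparable to $n$. There, a naive decomposition ``Gaussian tail for $\bar X_n$ plus chi-square lower tail for $s_n$'' fails, since $s_n$ cannot be bounded below by $\sigma/\sqrt3$ with probability $1-\delta/2$. This is why we will work with the exact $t$-law (or the conditioning argument), and the constant $6$ must be checked against the endpoint $x_0$ carefully, possibly treating $n\le 3$ separately.
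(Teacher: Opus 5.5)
Your proof is correct, but it reaches the key tail bound by a genuinely different route than the paper.

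\textbf{Comparison with the paper.} The paper integrates the $t_{n-1}$ density directly. Using $x/t\ge 1$ on $[t,\infty)$, it gets $\mathbb{P}(Z\ge t)\le \frac{c_n}{t}\frac{n-1}{n-2}\bigl(1+\frac{t^2}{n-1}\bigr)^{-(n-2)/2}$. It then bounds $c_n$ via Gautschi's inequality and finishes with $(1+z)^{-p}\le e^{-pz/2}$, invoked for $0\le z\le 1/2$; together these steps need $n\ge 6$. You instead condition on $s_n$ and use the $\chi^2_\nu$ Laplace transform. This gives a closed-form bound with no Gamma-function ratios, valid for every $\nu\ge 1$. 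You lose the paper's $1/t$ prefactor, but it is not needed here.

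Your Step 2 is also better calibrated to the stated range. At the endpoint $\delta=e^{-n/2}$, the paper's variable is $z=t^2/(n-1)=(3n+6\log 2)/(n-1)>3$. That lies outside the range $z\le 1/2$ where the paper applies its elementary inequality, which in fact fails for $z\gtrsim 2.5$. Your concavity argument works with the actual maximal value $x_0$, so it genuinely covers all $\delta\in[e^{-n/2},e^{-1}]$.

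\textbf{Corrections to your arithmetic.} First, your chain actually yields $\mathbb{P}(|T|\ge t)\le (1+t^2/\nu)^{-\nu/2}$ with no leading factor $2$, because $2\bar\Phi(u)\le e^{-u^2/2}$ already accounts for both tails. Keep this sharper form. With the factor $2$, the requirement $\frac{\nu}{2}\log(1+6L/\nu)\ge L$ fails at the endpoint for $n=2,3$. For example, $n=2$, $\delta=e^{-1}$ gives $2\cdot 11.16^{-1/2}\approx 0.60>e^{-1}$. The sharp form only needs $\frac{\nu}{2}\log(1+6L/\nu)\ge L-\log 2$, which holds for all $n\ge 2$.

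Second, $x_0=(3n+6\log 2)/(n-1)$ is about $10$ at $n=2$ and $5.4$ at $n=4$; it approaches $3$ only as $n\to\infty$. Your criterion $3\log(1+x_0)/x_0\ge 1$ holds up to $x_0\approx 5.7$, that is, exactly for $n\ge 4$. So if you keep the factor $2$, the separate treatment of $n\le 3$ that you anticipated is genuinely required.

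Third, in Step 3 there is no Mills-ratio term to absorb. The bound $\bar\Phi(u)\le\frac12 e^{-u^2/2}$ gives $z_{\delta/2}\le\sqrt{2\log(1/\delta)}$ exactly. The $o(1)$ therefore only accounts for $t_{\nu,\delta}\to z_{\delta/2}$, which is fortunate, since a Mills-ratio correction at fixed $\delta$ would not vanish as $n\to\infty$.
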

\begin{proof}
    The probability density function of the Student's t-distribution with $n-1$ degrees of freedom is 
    \[
    p_n(x) = c_n \l(1 + \frac{x^2}{n-1}\r)^{-n/2}
    \]
    where $c_n = \frac{\Gamma\l( \frac{n}{2}\r) }{\sqrt{\pi (n-1)} \Gamma\l( \frac{n-1}{2}\r)}$. Therefore, letting $Z$ have t-distribution with $n-1$ d.f., we deduce that 
    \be{
    \pr{Z\geq t} = \int_{t}^\infty p_n(x) dx 
    \leq \int_{t}^\infty \frac{x}{t} c_n \l(1 + \frac{x^2}{n-1}\r)^{-n/2} 
    \leq \frac{c_n}{t}\frac{n-1}{n-2}\l( 1 + \frac{t^2}{n-1} \r)^{-(n-2)/2},
    }
    where we used the fact that $\frac{x}{t}\geq 1$ in the region of integration. It remains to simplify the resulting expression using the inequality $\frac{\Gamma\l( \frac{n}{2}\r) }{\Gamma\l( \frac{n-1}{2}\r)} \leq \sqrt{n/2}$ (Gautschi's inequality) so that \[
    c_n \frac{n-1}{n-2} \leq \frac{1}{\sqrt{2\pi}}\sqrt{\frac{n}{n-1}}\frac{n-1}{n-2} \leq \sqrt{\frac{16}{15\pi}}
    \]
    for $n\geq 5$. Moreover, since $(1+z)^{-p}\leq e^{-pz/2}$ for all $p>0$ and $0\leq z\leq \frac12$,  
    \[
    \l( 1 + \frac{t^2}{n-1} \r)^{-(n-2)/2} \leq e^{-t^2/6}
    \]
    for $n\geq 6$. The same calculation yields a bound for the lower tail, hence the result follows by setting $t=\sqrt{6\log(2/\delta)}$. 

    The second part of the claim stems from the well-known fact that the Student's t-statistic is asymptotically normally distributed \citep{gine1997student}.
\end{proof}

\begin{lemma}\label{lem:counter}
    For $\alpha$ small enough, there exists a distribution $\xi \in \mathcal{P}_\alpha$ such that
    $
    \min(p_-(j),p_+(j)) \leq 1/4$. \footnote{It is clear from the proof that $\frac14$ can be replaced by any positive constant.}
\end{lemma}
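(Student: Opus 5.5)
We prove the lemma by building an imbalanced two-component mixture, in the spirit of the lower-bound constructions in the proof of Theorem \ref{theorem:main}. Set $L=\log(1/\delta)$ and $j_0=\lfloor n/L\rfloor$, and take a block size $j$ in the range $j\geq j_0$ relevant for condition \eqref{assumption:1}, with $j$ of order $n/L$. Let $\tau:=c_0/j$ for a small absolute constant $c_0$, and consider
\[
P=(1-\tau)\,\mathcal N(A,s^2)+\tau\,\delta_{z_0},\qquad z_0:=-\frac{1-\tau}{\tau}A,
\]
so that $P$ is centered, i.e. $\mu=0$. We calibrate $A,s$ so that the variance $\sigma^2=(1-\tau)(A^2+s^2)+\tau z_0^2\approx s^2+A^2/\tau$ is dominated by the rare atom. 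Normalizing $\sigma=1$, this means $s^2\asymp\alpha$ and $A^2/\tau\asymp 1$, that is, $A\asymp\sqrt{\tau}$.

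\textbf{Step 1: membership in $\mathcal P_\alpha$.} The standardized variable $\xi=X/\sigma$ puts mass $1-\tau$ on a Gaussian with mean $A/\sigma$ and variance $s^2/\sigma^2\asymp\alpha$. It puts mass $\tau$ at $z_0/\sigma$, where $|z_0/\sigma|^2\asymp 1/\tau\asymp n/L$. Taking $\gamma=c\alpha$ in the definition of $\alpha_{n,\delta}$, the truncation level $n\gamma/L$ comfortably contains the Gaussian bulk. It therefore suffices to check
\[
\mathbf E\bigl[\xi^2\mathbf 1\{\xi^2\le n\gamma/L\}\bigr]\ge (1-\tau)\bigl(s^2+A^2\bigr)/\sigma^2\gtrsim \alpha\ge \gamma ,
\]
which holds for a suitable choice of $c$, exactly as in the computation for $P_0$ in the proof of Theorem \ref{theorem:main}. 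Hence $\alpha_{n,\delta}(\xi)\ge c\alpha$, and relabelling constants gives $P\in\mathcal P_\alpha$. This step mirrors the existing calculations and is routine.

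\textbf{Step 2: failure of approximate symmetry.} We work on the event $E$ that none of $X_1,\dots,X_j$ equals $z_0$. Its probability is $(1-\tau)^j\approx e^{-c_0}$, which is at least $7/8$ for $c_0$ small. On $E$ the block mean is distributed as $\mathcal N(A,s^2/j)$. Therefore
\[
p_-(j)\le \mathbb P(E^c)+\mathbb P\bigl(\mathcal N(A,s^2/j)\le 0\bigr)=\mathbb P(E^c)+\Phi\bigl(-A\sqrt j/s\bigr).
\]
Now $A\sqrt j/s\asymp\sqrt{\tau j/\alpha}=\sqrt{c_0/\alpha}$, which tends to infinity as $\alpha\to0$. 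So for $\alpha$ small enough both terms are at most $1/8$, giving $p_-(j)\le 1/4$. The same reasoning shows that any fixed positive constant can replace $1/4$, as stated in the footnote.

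\textbf{Main obstacle.} The delicate point is the simultaneous calibration of $\tau$ and $s^2$. The rare component must be unlikely within one block, which forces $\tau j\lesssim1$, yet it must carry almost all the variance. At the same time, the bulk variance $s^2\asymp\alpha$ has to be small enough that the shift $A$ dominates the fluctuations of the bulk, while remaining large enough to keep $\alpha_{n,\delta}\gtrsim\alpha$. The choice $\tau\asymp 1/j\asymp L/n$ with $s^2\asymp\alpha$ reconciles these requirements. If one insists on all $j\ge j_0$ at once, which is stronger than the statement needs, I would just check the single worst case $j=j_0$, since that is where the statement is used.
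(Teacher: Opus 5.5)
Your argument is correct and is essentially the paper's. The ingredients match: the same centered mixture $(1-\tau)\mathcal N(A,s^2)+\tau\delta_{z_0}$ with bulk variance of order $\alpha$ and $A\asymp\sqrt{\tau}$, membership in $\mathcal P_\alpha$ via the truncated Gaussian bulk, and the bound $p_-(j)\le \mathbb P(E^c)+\Phi(-A\sqrt{j}/s)$ on the no-atom event. The one difference is calibration, and yours is the better one: the paper fixes $\tau=\log(1/\delta)/n$ and evaluates at $m=\lfloor n/(10\log(1/\delta))\rfloor$, which is below the threshold $\lfloor n/\log(1/\delta)\rfloor$ in \eqref{assumption:1}, whereas your choice $\tau=c_0/j$ with $j\ge\lfloor n/\log(1/\delta)\rfloor$ exhibits the failure at a block size where that condition is actually imposed.
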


\begin{proof}

Assume $\log(1/\delta)/n \leq \alpha \leq 1/4$. We aim to show that for sufficiently small $\alpha$, there exists a distribution in $\mathcal{P}_\alpha$ that is highly asymmetric, such that the sample mean remains on one side of the true mean with high probability. Consider the distribution $P_0$ defined as in the proof of Proposition \ref{prop:sigma_lower}:$$P_0 = (1 - \tau) \mathcal{N}(A, \alpha) + \tau \delta_{z_0},$$where $\tau = \frac{\log(1/\delta)}{n}$, $A = \sqrt{\frac{\log(1/\delta)}{n}}$, and $z_0 = -\frac{1-\tau}{\tau}A$. As established previously, $P_0 \in \mathcal{P}_\alpha$ and $\mathbb{E}(\xi) = 0$.

Let $\xi_1, \dots, \xi_m$ be i.i.d. copies of $\xi \sim P_0$, where $m = \lfloor \frac{n}{10\log(1/\delta)} \rfloor$. We can represent each $\xi_i$ using an auxiliary Bernoulli variable $B_i \sim \text{Bern}(1-\tau)$:$$\xi_i = B_i Z_i + (1-B_i) z_0, \quad \text{where } Z_i \sim \mathcal{N}(A, \alpha).$$Now, consider the event $\mathcal{E} = \{B_1 = \dots = B_m = 1\}$, which corresponds to the case where no ``outlier" from the $\delta_{z_0}$ component is sampled. The probability of this event is:$$\mathbb{P}(\mathcal{E}) = (1-\tau)^m = \left(1 - \frac{\log(1/\delta)}{n}\right)^{\frac{n}{10\log(1/\delta)}} \geq \exp(-1/10) \approx 0.904.$$On the event $\mathcal{E}$, the sample mean $\bar{\xi}_m$ is simply the empirical mean of the Gaussian components $Z_i$. Thus:
\begin{align*}p_+(m) = \mathbb{P}\left( \frac{1}{m} \sum_{i=1}^m \xi_i \geq 0 \right) &\geq \mathbb{P}\left( \frac{1}{m} \sum_{i=1}^m Z_i \geq 0 \ \middle| \ \mathcal{E} \right) \mathbb{P}(\mathcal{E}) \\
&\geq \mathbb{P}\left( A + \sqrt{\frac{\alpha}{m}} \mathcal{N}(0,1) \geq 0 \right) e^{-1/10}.
\end{align*}
Since $A = \sqrt{\log(1/\delta)/n}$ and $m \approx n/(10\log(1/\delta))$, the term inside the probability is:$$\mathbb{P}\left( \mathcal{N}(0,1) \geq - A\sqrt{m/\alpha} \right) \approx \mathbb{P}\left( \mathcal{N}(0,1) \geq - \sqrt{\frac{1}{10\alpha}} \right).$$As $\alpha \to 0$, this probability tends to $1$. Consequently, $p_+(m) \geq e^{-1/10}(1 - \epsilon) > 3/4$ for small $\alpha$. Since $p_-(m) = 1 - p_+(m)$, it follows that:$$\min(p_-(m), p_+(m)) \leq 1 - e^{-1/10}(1-\epsilon) < 1/4.$$This demonstrates that the approximate symmetry condition \eqref{assumption:1} is violated for $P \in \mathcal{P}_\alpha$, proving that the construction in \cite{devroye2016sub} cannot be applied uniformly to this class.

\end{proof}

%%%%%%%%%%%%%%%%%%%%%%%%%%%%%%%%%%%%%%%%%%%%%%%%%%%%%%%%%%%%

\newpage
\section*{NeurIPS Paper Checklist}

\begin{enumerate}

\item {\bf Claims}
    \item[] Question: Do the main claims made in the abstract and introduction accurately reflect the paper's contributions and scope?
    \item[] Answer: \answerYes{} % Replace by \answerYes{}, \answerNo{}, or \answerNA{}.
    \item[] Justification: the abstract explains the main contributions while sections 1 and 2 make the claims mathematically precise.
    \item[] Guidelines:
    \begin{itemize}
        \item The answer \answerNA{} means that the abstract and introduction do not include the claims made in the paper.
        \item The abstract and/or introduction should clearly state the claims made, including the contributions made in the paper and important assumptions and limitations. A \answerNo{} or \answerNA{} answer to this question will not be perceived well by the reviewers. 
        \item The claims made should match theoretical and experimental results, and reflect how much the results can be expected to generalize to other settings. 
        \item It is fine to include aspirational goals as motivation as long as it is clear that these goals are not attained by the paper. 
    \end{itemize}

\item {\bf Limitations}
    \item[] Question: Does the paper discuss the limitations of the work performed by the authors?
    \item[] Answer: \answerYes{} % Replace by \answerYes{}, \answerNo{}, or \answerNA{}.
    \item[] Justification: yes, we discuss limitations and possible extensions of our results in section \textbf{``Final remarks and future directions''} at the end of the manuscript.
    \item[] Guidelines:
    \begin{itemize}
        \item The answer \answerNA{} means that the paper has no limitation while the answer \answerNo{} means that the paper has limitations, but those are not discussed in the paper. 
        \item The authors are encouraged to create a separate ``Limitations'' section in their paper.
        \item The paper should point out any strong assumptions and how robust the results are to violations of these assumptions (e.g., independence assumptions, noiseless settings, model well-specification, asymptotic approximations only holding locally). The authors should reflect on how these assumptions might be violated in practice and what the implications would be.
        \item The authors should reflect on the scope of the claims made, e.g., if the approach was only tested on a few datasets or with a few runs. In general, empirical results often depend on implicit assumptions, which should be articulated.
        \item The authors should reflect on the factors that influence the performance of the approach. For example, a facial recognition algorithm may perform poorly when image resolution is low or images are taken in low lighting. Or a speech-to-text system might not be used reliably to provide closed captions for online lectures because it fails to handle technical jargon.
        \item The authors should discuss the computational efficiency of the proposed algorithms and how they scale with dataset size.
        \item If applicable, the authors should discuss possible limitations of their approach to address problems of privacy and fairness.
        \item While the authors might fear that complete honesty about limitations might be used by reviewers as grounds for rejection, a worse outcome might be that reviewers discover limitations that aren't acknowledged in the paper. The authors should use their best judgment and recognize that individual actions in favor of transparency play an important role in developing norms that preserve the integrity of the community. Reviewers will be specifically instructed to not penalize honesty concerning limitations.
    \end{itemize}

\item {\bf Theory assumptions and proofs}
    \item[] Question: For each theoretical result, does the paper provide the full set of assumptions and a complete (and correct) proof?
    \item[] Answer: \answerYes{} % Replace by \answerYes{}, \answerNo{}, or \answerNA{}.
    \item[] Justification: while high-level intuition behind the proofs is outlined in the main text, the complete technical arguments are given in the supplementary material.
    \item[] Guidelines:
    \begin{itemize}
        \item The answer \answerNA{} means that the paper does not include theoretical results. 
        \item All the theorems, formulas, and proofs in the paper should be numbered and cross-referenced.
        \item All assumptions should be clearly stated or referenced in the statement of any theorems.
        \item The proofs can either appear in the main paper or the supplemental material, but if they appear in the supplemental material, the authors are encouraged to provide a short proof sketch to provide intuition. 
        \item Inversely, any informal proof provided in the core of the paper should be complemented by formal proofs provided in appendix or supplemental material.
        \item Theorems and Lemmas that the proof relies upon should be properly referenced. 
    \end{itemize}

    \item {\bf Experimental result reproducibility}
    \item[] Question: Does the paper fully disclose all the information needed to reproduce the main experimental results of the paper to the extent that it affects the main claims and/or conclusions of the paper (regardless of whether the code and data are provided or not)?
    \item[] Answer: \answerYes{} % Replace by \answerYes{}, \answerNo{}, or \answerNA{}.
    \item[] Justification: detailed overview of numerical experiments is provided in section 5 of the paper.
    \item[] Guidelines:
    \begin{itemize}
        \item The answer \answerNA{} means that the paper does not include experiments.
        \item If the paper includes experiments, a \answerNo{} answer to this question will not be perceived well by the reviewers: Making the paper reproducible is important, regardless of whether the code and data are provided or not.
        \item If the contribution is a dataset and\slash or model, the authors should describe the steps taken to make their results reproducible or verifiable. 
        \item Depending on the contribution, reproducibility can be accomplished in various ways. For example, if the contribution is a novel architecture, describing the architecture fully might suffice, or if the contribution is a specific model and empirical evaluation, it may be necessary to either make it possible for others to replicate the model with the same dataset, or provide access to the model. In general. releasing code and data is often one good way to accomplish this, but reproducibility can also be provided via detailed instructions for how to replicate the results, access to a hosted model (e.g., in the case of a large language model), releasing of a model checkpoint, or other means that are appropriate to the research performed.
        \item While NeurIPS does not require releasing code, the conference does require all submissions to provide some reasonable avenue for reproducibility, which may depend on the nature of the contribution. For example
        \begin{enumerate}
            \item If the contribution is primarily a new algorithm, the paper should make it clear how to reproduce that algorithm.
            \item If the contribution is primarily a new model architecture, the paper should describe the architecture clearly and fully.
            \item If the contribution is a new model (e.g., a large language model), then there should either be a way to access this model for reproducing the results or a way to reproduce the model (e.g., with an open-source dataset or instructions for how to construct the dataset).
            \item We recognize that reproducibility may be tricky in some cases, in which case authors are welcome to describe the particular way they provide for reproducibility. In the case of closed-source models, it may be that access to the model is limited in some way (e.g., to registered users), but it should be possible for other researchers to have some path to reproducing or verifying the results.
        \end{enumerate}
    \end{itemize}

\item {\bf Open access to data and code}
    \item[] Question: Does the paper provide open access to the data and code, with sufficient instructions to faithfully reproduce the main experimental results, as described in supplemental material?
    \item[] Answer: \answerYes{} % Replace by \answerYes{}, \answerNo{}, or \answerNA{}.
    \item[] Justification: Zip file with anonymized code has been submitted with the supplementary materials.
    \item[] Guidelines:
    \begin{itemize}
        \item The answer \answerNA{} means that paper does not include experiments requiring code.
        \item Please see the NeurIPS code and data submission guidelines (\url{https://neurips.cc/public/guides/CodeSubmissionPolicy}) for more details.
        \item While we encourage the release of code and data, we understand that this might not be possible, so \answerNo{} is an acceptable answer. Papers cannot be rejected simply for not including code, unless this is central to the contribution (e.g., for a new open-source benchmark).
        \item The instructions should contain the exact command and environment needed to run to reproduce the results. See the NeurIPS code and data submission guidelines (\url{https://neurips.cc/public/guides/CodeSubmissionPolicy}) for more details.
        \item The authors should provide instructions on data access and preparation, including how to access the raw data, preprocessed data, intermediate data, and generated data, etc.
        \item The authors should provide scripts to reproduce all experimental results for the new proposed method and baselines. If only a subset of experiments are reproducible, they should state which ones are omitted from the script and why.
        \item At submission time, to preserve anonymity, the authors should release anonymized versions (if applicable).
        \item Providing as much information as possible in supplemental material (appended to the paper) is recommended, but including URLs to data and code is permitted.
    \end{itemize}

\item {\bf Experimental setting/details}
    \item[] Question: Does the paper specify all the training and test details (e.g., data splits, hyperparameters, how they were chosen, type of optimizer) necessary to understand the results?
    \item[] Answer: \answerYes{} % Replace by \answerYes{}, \answerNo{}, or \answerNA{}.
    \item[] Justification: details are given in section 5 of the paper.
    \item[] Guidelines:
    \begin{itemize}
        \item The answer \answerNA{} means that the paper does not include experiments.
        \item The experimental setting should be presented in the core of the paper to a level of detail that is necessary to appreciate the results and make sense of them.
        \item The full details can be provided either with the code, in appendix, or as supplemental material.
    \end{itemize}

\item {\bf Experiment statistical significance}
    \item[] Question: Does the paper report error bars suitably and correctly defined or other appropriate information about the statistical significance of the experiments?
    \item[] Answer: \answerNA{} % Replace by \answerYes{}, \answerNo{}, or \answerNA{}.
    \item[] Justification: 
    \item[] Guidelines:
    \begin{itemize}
        \item The answer \answerNA{} means that the paper does not include experiments.
        \item The authors should answer \answerYes{} if the results are accompanied by error bars, confidence intervals, or statistical significance tests, at least for the experiments that support the main claims of the paper.
        \item The factors of variability that the error bars are capturing should be clearly stated (for example, train/test split, initialization, random drawing of some parameter, or overall run with given experimental conditions).
        \item The method for calculating the error bars should be explained (closed form formula, call to a library function, bootstrap, etc.)
        \item The assumptions made should be given (e.g., Normally distributed errors).
        \item It should be clear whether the error bar is the standard deviation or the standard error of the mean.
        \item It is OK to report 1-sigma error bars, but one should state it. The authors should preferably report a 2-sigma error bar than state that they have a 96\% CI, if the hypothesis of Normality of errors is not verified.
        \item For asymmetric distributions, the authors should be careful not to show in tables or figures symmetric error bars that would yield results that are out of range (e.g., negative error rates).
        \item If error bars are reported in tables or plots, the authors should explain in the text how they were calculated and reference the corresponding figures or tables in the text.
    \end{itemize}

\item {\bf Experiments compute resources}
    \item[] Question: For each experiment, does the paper provide sufficient information on the computer resources (type of compute workers, memory, time of execution) needed to reproduce the experiments?
    \item[] Answer: \answerYes{} % Replace by \answerYes{}, \answerNo{}, or \answerNA{}.
    \item[] Justification: numerical experiments were produced using a standard laptop computer and do not require significant computational resources.
    \item[] Guidelines:
    \begin{itemize}
        \item The answer \answerNA{} means that the paper does not include experiments.
        \item The paper should indicate the type of compute workers CPU or GPU, internal cluster, or cloud provider, including relevant memory and storage.
        \item The paper should provide the amount of compute required for each of the individual experimental runs as well as estimate the total compute. 
        \item The paper should disclose whether the full research project required more compute than the experiments reported in the paper (e.g., preliminary or failed experiments that didn't make it into the paper). 
    \end{itemize}
    
\item {\bf Code of ethics}
    \item[] Question: Does the research conducted in the paper conform, in every respect, with the NeurIPS Code of Ethics \url{https://neurips.cc/public/EthicsGuidelines}?
    \item[] Answer: \answerYes{} % Replace by \answerYes{}, \answerNo{}, or \answerNA{}.
    \item[] Justification: The research is theoretical in nature and focuses on mathematical statistics. It does not involve human subjects, sensitive data, or potential for immediate harmful applications.
    \item[] Guidelines:
    \begin{itemize}
        \item The answer \answerNA{} means that the authors have not reviewed the NeurIPS Code of Ethics.
        \item If the authors answer \answerNo, they should explain the special circumstances that require a deviation from the Code of Ethics.
        \item The authors should make sure to preserve anonymity (e.g., if there is a special consideration due to laws or regulations in their jurisdiction).
    \end{itemize}

\item {\bf Broader impacts}
    \item[] Question: Does the paper discuss both potential positive societal impacts and negative societal impacts of the work performed?
    \item[] Answer: \answerNo{} % Replace by \answerYes{}, \answerNo{}, or \answerNA{}.
    \item[] Justification: this is primarily a theoretical paper and it does not have any immediate societal impacts.
    \item[] Guidelines:
    \begin{itemize}
        \item The answer \answerNA{} means that there is no societal impact of the work performed.
        \item If the authors answer \answerNA{} or \answerNo, they should explain why their work has no societal impact or why the paper does not address societal impact.
        \item Examples of negative societal impacts include potential malicious or unintended uses (e.g., disinformation, generating fake profiles, surveillance), fairness considerations (e.g., deployment of technologies that could make decisions that unfairly impact specific groups), privacy considerations, and security considerations.
        \item The conference expects that many papers will be foundational research and not tied to particular applications, let alone deployments. However, if there is a direct path to any negative applications, the authors should point it out. For example, it is legitimate to point out that an improvement in the quality of generative models could be used to generate Deepfakes for disinformation. On the other hand, it is not needed to point out that a generic algorithm for optimizing neural networks could enable people to train models that generate Deepfakes faster.
        \item The authors should consider possible harms that could arise when the technology is being used as intended and functioning correctly, harms that could arise when the technology is being used as intended but gives incorrect results, and harms following from (intentional or unintentional) misuse of the technology.
        \item If there are negative societal impacts, the authors could also discuss possible mitigation strategies (e.g., gated release of models, providing defenses in addition to attacks, mechanisms for monitoring misuse, mechanisms to monitor how a system learns from feedback over time, improving the efficiency and accessibility of ML).
    \end{itemize}
    
\item {\bf Safeguards}
    \item[] Question: Does the paper describe safeguards that have been put in place for responsible release of data or models that have a high risk for misuse (e.g., pre-trained language models, image generators, or scraped datasets)?
    \item[] Answer: \answerNA{} % Replace by \answerYes{}, \answerNo{}, or \answerNA{}.
    \item[] Justification: 
    \item[] Guidelines:
    \begin{itemize}
        \item The answer \answerNA{} means that the paper poses no such risks.
        \item Released models that have a high risk for misuse or dual-use should be released with necessary safeguards to allow for controlled use of the model, for example by requiring that users adhere to usage guidelines or restrictions to access the model or implementing safety filters. 
        \item Datasets that have been scraped from the Internet could pose safety risks. The authors should describe how they avoided releasing unsafe images.
        \item We recognize that providing effective safeguards is challenging, and many papers do not require this, but we encourage authors to take this into account and make a best faith effort.
    \end{itemize}

\item {\bf Licenses for existing assets}
    \item[] Question: Are the creators or original owners of assets (e.g., code, data, models), used in the paper, properly credited and are the license and terms of use explicitly mentioned and properly respected?
    \item[] Answer: \answerNA{} % Replace by \answerYes{}, \answerNo{}, or \answerNA{}.
    \item[] Justification: 
    \item[] Guidelines:
    \begin{itemize}
        \item The answer \answerNA{} means that the paper does not use existing assets.
        \item The authors should cite the original paper that produced the code package or dataset.
        \item The authors should state which version of the asset is used and, if possible, include a URL.
        \item The name of the license (e.g., CC-BY 4.0) should be included for each asset.
        \item For scraped data from a particular source (e.g., website), the copyright and terms of service of that source should be provided.
        \item If assets are released, the license, copyright information, and terms of use in the package should be provided. For popular datasets, \url{paperswithcode.com/datasets} has curated licenses for some datasets. Their licensing guide can help determine the license of a dataset.
        \item For existing datasets that are re-packaged, both the original license and the license of the derived asset (if it has changed) should be provided.
        \item If this information is not available online, the authors are encouraged to reach out to the asset's creators.
    \end{itemize}

\item {\bf New assets}
    \item[] Question: Are new assets introduced in the paper well documented and is the documentation provided alongside the assets?
    \item[] Answer: \answerNA{} % Replace by \answerYes{}, \answerNo{}, or \answerNA{}.
    \item[] Justification: 
    \item[] Guidelines:
    \begin{itemize}
        \item The answer \answerNA{} means that the paper does not release new assets.
        \item Researchers should communicate the details of the dataset\slash code\slash model as part of their submissions via structured templates. This includes details about training, license, limitations, etc. 
        \item The paper should discuss whether and how consent was obtained from people whose asset is used.
        \item At submission time, remember to anonymize your assets (if applicable). You can either create an anonymized URL or include an anonymized zip file.
    \end{itemize}

\item {\bf Crowdsourcing and research with human subjects}
    \item[] Question: For crowdsourcing experiments and research with human subjects, does the paper include the full text of instructions given to participants and screenshots, if applicable, as well as details about compensation (if any)? 
    \item[] Answer: \answerNA{} % Replace by \answerYes{}, \answerNo{}, or \answerNA{}.
    \item[] Justification: 
    \item[] Guidelines:
    \begin{itemize}
        \item The answer \answerNA{} means that the paper does not involve crowdsourcing nor research with human subjects.
        \item Including this information in the supplemental material is fine, but if the main contribution of the paper involves human subjects, then as much detail as possible should be included in the main paper. 
        \item According to the NeurIPS Code of Ethics, workers involved in data collection, curation, or other labor should be paid at least the minimum wage in the country of the data collector. 
    \end{itemize}

\item {\bf Institutional review board (IRB) approvals or equivalent for research with human subjects}
    \item[] Question: Does the paper describe potential risks incurred by study participants, whether such risks were disclosed to the subjects, and whether Institutional Review Board (IRB) approvals (or an equivalent approval/review based on the requirements of your country or institution) were obtained?
    \item[] Answer: \answerNA{} % Replace by \answerYes{}, \answerNo{}, or \answerNA{}.
    \item[] Justification: 
    \item[] Guidelines:
    \begin{itemize}
        \item The answer \answerNA{} means that the paper does not involve crowdsourcing nor research with human subjects.
        \item Depending on the country in which research is conducted, IRB approval (or equivalent) may be required for any human subjects research. If you obtained IRB approval, you should clearly state this in the paper. 
        \item We recognize that the procedures for this may vary significantly between institutions and locations, and we expect authors to adhere to the NeurIPS Code of Ethics and the guidelines for their institution. 
        \item For initial submissions, do not include any information that would break anonymity (if applicable), such as the institution conducting the review.
    \end{itemize}

\item {\bf Declaration of LLM usage}
    \item[] Question: Does the paper describe the usage of LLMs if it is an important, original, or non-standard component of the core methods in this research? Note that if the LLM is used only for writing, editing, or formatting purposes and does \emph{not} impact the core methodology, scientific rigor, or originality of the research, declaration is not required.
    %this research? 
    \item[] Answer: \answerNA{} % Replace by \answerYes{}, \answerNo{}, or \answerNA{}.
    \item[] Justification: 
    \item[] Guidelines:
    \begin{itemize}
        \item The answer \answerNA{} means that the core method development in this research does not involve LLMs as any important, original, or non-standard components.
        \item Please refer to our LLM policy in the NeurIPS handbook for what should or should not be described.
    \end{itemize}

\end{enumerate}

\end{document}